\numberwithin{equation}{section}
\newtheorem{theorem}{Theorem}[section]
\newtheorem{corollary}[theorem]{Corollary}
\newtheorem{lemma}[theorem]{Lemma}
\theoremstyle{remark}
\newtheorem{remark}{Remark}[section]
\theoremstyle{definition}
\newtheorem{definition}[theorem]{Definition}
\newtheorem{example}[theorem]{Example}
\newcommand{\R}{\mathbb{R}}
\newcommand{\C}{\mathbb{C}}
\begin{document}

\title[Magnetic virial. Morawetz, smoothing and Strichartz estimates]
{Magnetic virial identities, weak dispersion and Strichartz
inequalities}

\begin{abstract}
  We show a family of virial-type identities for the Schr\"odinger
  and wave equations with electromagnetic potentials. As a
  consequence, some weak dispersive inequalities in space dimension $n\geq3$,
  involving Morawetz
  and smoothing estimates, are proved; finally, we apply them to prove Strichartz
  inequalities for the wave equation with a non-trapping  electromagnetic
  potential with almost Coulomb decay.
\end{abstract}

\date{\today}

\author{Luca Fanelli}
\address{Luca Fanelli:
SAPIENZA Unversit$\grave{\text{a}}$ di Roma, Dipartimento di
Matematica, Piazzale A.~Moro 2, I-00185 Roma, Italy}
\email{fanelli@mat.uniroma1.it}

\author{Luis Vega}
\address{Luis Vega: Universidad del Pais Vasco, Departamento de
Matem$\acute{\text{a}}$ticas, Apartado 644, 48080, Bilbao, Spain}
\email{luis.vega@ehu.es}

\subjclass[2000]{35J10, 35L05, 58J45.}

\keywords{magnetic potentials, virial identities, weak dispersive
estimates, Strichartz estimates, Schr\"odinger equation, wave
equation}

\maketitle


\section{Introduction}\label{sec.introd}

In this paper, we consider electromagnetic Hamiltonians in the
standard covariant form
\begin{equation}\label{eq.H}
  H=-\nabla_A^2+V(x),
\end{equation}
where
\begin{equation}\label{eq.nablaa}
 \nabla_A=\nabla-iA,
 \qquad
 A=(A^1,\dots,A^n):\R^n\to\R^n
\end{equation}
and $V:\R^n\to\R$; the magnetic potential $A$ satisfies the Coulomb
gauge condition
\begin{equation}\label{eq.gauge}
  \text{div}A=0.
\end{equation}
Related to these Hamiltonians, we study the magnetic Schr\"odinger
equation
\begin{equation}\label{eq.schro}
  \begin{cases}
    iu_t(t,x)-Hu(t,x)=0
    \\
    u(0,x)=f(x),
  \end{cases}
\end{equation}
and the magnetic wave equation
\begin{equation}\label{eq.wave}
  \begin{cases}
    u_{tt}(t,x)+Hu(t,x)=0
    \\
    u(0,x)=f(x)
    \\
    u_t(0,x)=g(x),
  \end{cases}
\end{equation}
where in both cases the unknown is a function $u:\R^{1+n}\to\C$. We
are interested in weak dispersive phenomena for equations
\eqref{eq.schro} and \eqref{eq.wave}; in particular, we will point
our attention on the structures of $A$ and $V$ which allow the weak
dispersion.

In the family of weak dispersive estimates we find, among the
others, Morawetz and smoothing estimates.

For the free case $A\equiv 0\equiv V$, the Morawetz estimates are,
for $n\geq3$,
\begin{equation}\label{eq.morschrofree}
    \int_0^{+\infty}\int_{\R^n}\frac{\left|\partial_\tau
    e^{it\Delta}f\right|}{|x|}\leq\|f\|_{\dot H^\frac12}
\end{equation}
\begin{equation}\label{eq.morwavefree}
    \int_0^{+\infty}\int_{\R^n}\frac{\left|\partial_\tau
    e^{it\sqrt{-\Delta}}f\right|}{|x|}\leq\|f\|_{\dot H^1},
\end{equation}
where $\partial_\tau$ is the tangential derivative. Inequality
\eqref{eq.morwavefree} was proved in \cite{mor} for the Klein-Gordon
equation first, and then extended to the Schr\"odinger equation. We
also recall the smoothing estimates for the free equations
\begin{equation}\label{eq.smooschrofree}
    \sup_{R>0}\frac1R\int_0^{+\infty}\int_{|x|\leq R}
    \left|\nabla e^{it\Delta}f\right|^2\leq\|f\|_{\dot H^\frac12}
\end{equation}
\begin{equation}\label{eq.smoowavefree}
    \sup_{R>0}\frac1R\int_0^{+\infty}\int_{|x|\leq R}
    \left(\left|\partial_te^{it\sqrt{-\Delta}}f\right|^2
    +\left|\nabla e^{it\sqrt{-\Delta}}f\right|^2\right)\leq\|f\|_{\dot
    H^1}.
\end{equation}
Inequality \eqref{eq.smooschrofree} was proved independently in
\cite{cs}, \cite{s} and \cite{v}, and the proof can be easily
generalized to obtain \eqref{eq.smoowavefree}. We also mention the
paper \cite{pv}, in which  a proof of \eqref{eq.smoowavefree} for
the nonlinear wave equation based on a modification of Morawetz
ideas \cite{mor} is given. Extensions to the Schr\"odinger equation
were considered in \cite{brv} . However not so many results are
available for the magnetic case $A\neq0$; we mention \cite{brv1} \cite{pda-lf2},
\cite{ges}, \cite{GST} and \cite{ste} where Strichartz and smoothing
estimates for the magnetic Schr\"odinger and wave equations are
proved.

The aim of this paper  is to show, for non-trapping magnetic Hamiltonians, the relation
between virial identities and weak dispersion, in analogy with the
results in \cite{pv},  \cite{brv}, and \cite{brvv}.

All through the paper we will assume some regularity
assumptions on the Hamiltonian $H$.

{\bf (H1)} The Hamiltonian $H_A=-\nabla_A^2$ is essentially
self-adjoint on $L^2(\R^n)$, with form domain
\begin{equation*}
  D(H_A)=\left\{f: f\in L^2, \int|\nabla_Af|^2<\infty\right\}.
\end{equation*}

{\bf (H2)} The potential $V$ is a perturbation of $H_A$ in the
Kato-Rellich sense, i.e. there exists a small $\epsilon>0$ such that
\begin{equation}
  \|Vf\|_{L^2}\leq(1-\epsilon)\|H_Af\|_{L^2}+C\|f\|_{L^2},
\end{equation}
for all $f\in D(H_A)$.

Assumptions (H1), (H2) have several consequences about the existence
theory for equations \eqref{eq.schro}, \eqref{eq.wave}. First of
all, they imply the self-adjointness of $H$, by standard
perturbation techniques (see e.g. \cite{CFKS}); hence by the spectral
theorem we can define the Schr\"odinger and wave propagators
$S(t)=e^{itH}$, $W(t)=H^{-\frac12}e^{it\sqrt H}$. Moreover we can
define for any $s$ the distorted norms
\begin{equation*}
  \|f\|_{\dot{\mathcal H}^s}=\|H^{\frac s2}f\|_{L^2}.
  \qquad
  \|f\|_{\mathcal H^s}=\|f\|_{L^2}+\|H^{\frac s2}f\|_{L^2}.
\end{equation*}
Since $H$ and $H^s$ commute with each other, for any $s\geq0$, the
Schr\"odinger propagator $S(t)$ satisfy the family of conservation
laws
\begin{equation*}
  \|e^{itH}f\|_{\dot{\mathcal H}^s}=\|f\|_{\dot{\mathcal H}^s},
  \qquad
  s\geq0,
\end{equation*}
for all $t\in\R$. Similarly, the distorted wave energy
\begin{equation*}
  E(t)=\frac12\|u_t\|_{L^2}^2+\frac12\|u\|_{\dot{\mathcal H}^1}
\end{equation*}
is conserved on solutions of \eqref{eq.wave}.

For the validity of (H1) and (H2) see the standard reference
\cite{CFKS}.

In space dimension $n=3$ the magnetic field $B=\text{curl}A$ is a
physically relevant quantity for equations \eqref{eq.schro} and
\eqref{eq.wave}. In order to continue, we need to define the
analogous of $\text{curl}A$ in any space dimension; we give the
following definition.
\begin{definition}\label{def.B}
  For any $n\geq2$ the matrix-valued
  field $B:\R^n\to\mathcal M_{n\times n}(\R)$ is defined by
  \begin{equation*}
    B:=DA-DA^t,
    \qquad
    B_{ij}=\frac{\partial A^i}{\partial x^j}-\frac{\partial
    A^j}{\partial x^i}.
  \end{equation*}
  We also define the vector field $B_\tau:\R^n\to\R^n$ as follows:
  \begin{equation*}
    B_\tau=\frac{x}{|x|}B.
  \end{equation*}
\end{definition}
Hence $B$ is defined in terms of the anti-symmetric gradient of $A$.
In dimension $n=3$, the previous definition identifies
$B=\text{curl}\,A$, namely
  \begin{equation*}
    Bv=\text{curl}A\wedge v,
    \qquad
    \forall v\in\R^3.
  \end{equation*}
In particular, we have
  \begin{equation}\label{eq.curl}
    B_\tau=\frac{x}{|x|}\wedge\text{curl}\,A,
    \qquad n=3.
  \end{equation}
Hence $B_\tau(x)$ is the projection of $B=\text{curl}A$ on the
tangential space in $x$ to the sphere of radius $|x|$, for $n=3$.
Observe also that $B_\tau\cdot x=0$ for any $n\geq2$, hence $B_\tau$
is a tangential vector field in any dimension. Also notice that $A$
and $A+\nabla\psi$ produce the same $B$, for any $n\geq2$; actually,
in order to preserve the gauge invariance of our results, it is our
interest to give always assumptions in terms of $B$.

We can now state our main results.
\subsection{Magnetic virial identities}\label{subsec.virial}
They are convexity (in time) properties for certain relevant
quantities related to the solutions of these equations. A 3D-version
of the virial identity for the magnetic Schr\"odinger equation
appears in \cite{gon}, \cite{gon2}; in Theorem \ref{thm.virialS} we
present the generalization to any space dimension, while in Theorem
\ref{thm.virialW} we give the analogous identity for the magnetic wave
equation \eqref{eq.wave}.

We start with the Schr\"odinger equation.
\begin{theorem}[Virial for magnetic Schr\"odinger]\label{thm.virialS}
  Let $\phi:\R^n\to\R$ be a radial, real-valued multiplier,
  $\phi=\phi(|x|)$, and let
  \begin{equation}\label{eq.tetaS}
    \Theta_S(t)=\int_{\R^n}\phi|u|^2\,dx.
  \end{equation}
  Then, for
  any solution $u$ of the magnetic Schr\"odinger equation \eqref{eq.schro} with
  initial datum $f\in L^2,\ H_Af\in L^2$, the following virial-type identity holds:
  \begin{align}
    \ddot\Theta_S(t)= & 4\int_{\R^n}
    \nabla_AuD^2\phi\overline{\nabla_Au}\,dx-\int_{\R^n}|u|^2\Delta^2\phi\,dx
    \nonumber
    \\
    & -2\int_{\R^n}\phi'V_r|u|^2\,dx+4\Im\int_{\R^n}u\phi'B_\tau\cdot\overline{\nabla_Au}\,dx,
    \label{eq.virialS}
  \end{align}
  where
  \begin{equation*}
    \left(D^2\phi\right)_{jk}=\frac{\partial^2}{\partial x^j\partial
    x^k}\phi,
    \qquad
    \Delta^2\phi=\Delta(\Delta\phi),
  \end{equation*}
  for $j,k=1,\dots,n$, are respectively the Hessian matrix and the
  bi-Laplacian of $\phi$.
\end{theorem}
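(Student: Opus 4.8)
The strategy is the standard virial/Morawetz one: differentiate $\Theta_S$ twice in time, use \eqref{eq.schro} to replace each $\partial_t$ by the spatial operator $H=-\nabla_A^2+V$, and then reorganize the resulting spatial integrals by repeated integration by parts, the magnetic field appearing from the non-commutativity of the covariant derivatives. The algebraic tools I will use are the magnetic Leibniz rule $\nabla_A(\phi u)=\phi\,\nabla_Au+u\,\nabla\phi$; the magnetic integration-by-parts formula $\int_{\R^n}(\nabla_A\cdot F)\,\bar g\,dx=-\int_{\R^n}F\cdot\overline{\nabla_Ag}\,dx$, valid because $A$ is real-valued; the pointwise relation $\Re\big(\bar u\,\nabla_Au\big)=\tfrac12\nabla|u|^2$; and the curvature identity $[(\nabla_A)_j,(\nabla_A)_k]u=iB_{jk}u$, with $B_{jk}$ as in Definition \ref{def.B}.

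\emph{First and second derivative.} Differentiating \eqref{eq.tetaS} and inserting $u_t=i(\nabla_A^2u-Vu)$: since $\phi,V$ are real the potential contribution is purely imaginary and drops under $\Re$, and one magnetic integration by parts (in which the real term $\int\phi|\nabla_Au|^2$ also drops) gives
\begin{equation*}
  \dot\Theta_S(t)=2\int_{\R^n}\nabla\phi\cdot J\,dx,
  \qquad
  J:=\Im\big(\bar u\,\nabla_Au\big),
\end{equation*}
the magnetic current. Differentiating once more, using again $u_t=i(\nabla_A^2u-Vu)$ together with $\partial_t\nabla_Au=\nabla_Au_t$ ($A$ is time-independent) and $\Im(iz)=\Re z$, the two potential terms collapse to $-(\partial_jV)|u|^2$, so that
\begin{equation*}
  \partial_tJ_j=\Re\big(\bar u\,(\nabla_A)_j\nabla_A^2u\big)-\Re\big((\nabla_A^2u)\,\overline{(\nabla_A)_ju}\big)-(\partial_jV)|u|^2 .
\end{equation*}
Hence $\ddot\Theta_S=2\int_{\R^n}\nabla\phi\cdot\partial_tJ\,dx$, and, since $\nabla\phi=\phi'\,x/|x|$, the last term already produces the contribution $-2\int\phi'V_r|u|^2\,dx$ with $V_r=(x/|x|)\cdot\nabla V$.

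\emph{The magnetic part.} For the first term I commute, $(\nabla_A)_j\nabla_A^2=\nabla_A^2(\nabla_A)_j+[(\nabla_A)_j,\nabla_A^2]$, and expand $[(\nabla_A)_j,\nabla_A^2]u=\sum_k\big(2iB_{jk}(\nabla_A)_ku+i(\partial_kB_{jk})u\big)$ via the curvature identity; the $\partial_kB_{jk}$ term is real and killed by $\Re$, while the other term, after the contraction $\sum_j\partial_j\phi\,B_{jk}=\phi'(B_\tau)_k$, contributes $4\,\Im\int u\,\phi'\,B_\tau\cdot\overline{\nabla_Au}\,dx$ — it is precisely the radiality of $\phi$ that forces the magnetic field to enter only through its tangential contraction $B_\tau=(x/|x|)B$. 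The remaining pieces $\Re(\bar u\,\nabla_A^2(\nabla_A)_ju)$ and $-\Re((\nabla_A^2u)\overline{(\nabla_A)_ju})$ I treat by two integrations by parts each: the genuinely second-order contributions cancel against each other, a symmetric leftover assembles into $4\int\nabla_Au\,D^2\phi\,\overline{\nabla_Au}\,dx$, and the lowest-order residue, using $\Re(\bar u\,\nabla_Au)=\tfrac12\nabla|u|^2$ followed by one more integration by parts, contributes $-\int|u|^2\Delta^2\phi\,dx$. Adding the potential term from the previous step gives exactly \eqref{eq.virialS}.

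\emph{Rigor and the hard step.} All integrations by parts are first carried out for Schwartz $f$, for which $u(t)$ is smooth and rapidly decaying and no boundary terms survive; the identity is then extended to $f\in L^2$ with $H_Af\in L^2$ by density, using the conservation laws $\|u(t)\|_{\dot{\mathcal H}^s}=\|f\|_{\dot{\mathcal H}^s}$ recorded above to pass to the limit in the quadratic expressions. The only genuinely delicate point is the bookkeeping in the previous paragraph: one must track every first-order-in-$\nabla_A$ term through the integrations by parts and verify that they recombine, with the exact constants $4,-1,-2,4$, into the Hessian, bi-Laplacian, potential and $B_\tau$ contributions. That cross-cancellation — together with the fact that the magnetic field survives only through $B_\tau$ because $\nabla\phi$ is parallel to $x$ — is the heart of the computation; everything else is routine.
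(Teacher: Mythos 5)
Your proposal is correct and is essentially the paper's own argument, presented in current language rather than commutator language: where you compute $\dot\Theta_S=2\int\nabla\phi\cdot J$ with $J=\Im(\bar u\,\nabla_Au)$ and then $\ddot\Theta_S=2\int\nabla\phi\cdot\partial_t J$, the paper computes $\dot\Theta_S=-i\langle u,[H,\phi]u\rangle$, sets $T=-[H,\phi]=2\nabla\phi\cdot\nabla_A+\Delta\phi$, and evaluates $\ddot\Theta_S=\langle u,[H,T]u\rangle$ by expanding $[\nabla_A^2,T]$; these are the same bookkeeping, and both hinge on the curvature identity $[(\nabla_A)_j,(\nabla_A)_k]=iB_{jk}$, the contraction $\sum_j\phi_jB_{jk}=\phi'(B_\tau)_k$ forced by radiality of $\phi$, magnetic integration by parts $\int(\nabla_A)_jf\,\bar g=-\int f\,\overline{(\nabla_A)_jg}$, and the same density argument from $f\in D(H_A)$.
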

The analogous result for the wave equation \eqref{eq.wave} is the
following.
\begin{theorem}[Virial for magnetic wave]\label{thm.virialW}
  Let $\phi,\Psi:\R^n\to\R$, be  two radial, real-valued multipliers,
  and let
  \begin{equation}\label{eq.tetaW}
    \Theta_W(t)=\int_{\R^n}\left(\phi|u_t|^2+\phi|\nabla_Au|^2
    -\frac12(\Delta\phi)|u|^2\right)\,dx+\int|u|^2\phi V\,dx+\int|u|^2\Psi\,dx.
  \end{equation}
  Then, for any solution $u$ of the magnetic wave equation
  \eqref{eq.wave} with initial data $f,g\in L^2,\ H_Af,H_Ag\in L^2$,
  the following virial-type identity holds:
  \begin{align}
    \ddot\Theta_W(t)= & 2\int_{\R^n}\nabla_AuD^2\phi\overline{\nabla_Au}\,dx
    -\frac12\int_{\R^n}|u|^2\Delta^2\phi\,dx
    \nonumber
    \\
    &
    +2\int|u_t|^2\Psi\,dx-2\int|\nabla_Au|^2\Psi\,dx
    +\int|u|^2\Delta\Psi\,dx
    \nonumber
    \\
    &
    -\int\phi'V_r|u|^2\,dx+2\Im\int_{\R^n}u\phi'B_\tau\cdot\overline{\nabla_Au}\,dx.
    \label{eq.virialW}
  \end{align}
\end{theorem}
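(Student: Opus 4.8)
The plan is to prove \eqref{eq.virialW} by computing $\ddot\Theta_W$ head-on: differentiate $\Theta_W$ twice in $t$, use \eqref{eq.wave} in the form $u_{tt}=\nabla_A^2u-Vu$ to remove all time derivatives of order $\geq2$, and reduce everything to spatial integration by parts. As in Theorem~\ref{thm.virialS}, the magnetic field will enter only through the commutator identity $[\nabla_A^j,\nabla_A^k]=iB_{jk}$, immediate from Definition~\ref{def.B}. First I compute $\dot\Theta_W$: differentiating $\int\phi(|u_t|^2+|\nabla_Au|^2)$, inserting the equation and integrating by parts once, the terms in $\nabla_Au_t$ cancel and one is left with $-2\Re\int\bar u_t\,\nabla\phi\cdot\nabla_Au$ together with $-\int\phi V\,\partial_t|u|^2$; the latter is annihilated by $\partial_t\int\phi V|u|^2$, which is exactly why that term sits in $\Theta_W$. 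Hence $\dot\Theta_W=-2\Re\int\bar u_t\,\nabla\phi\cdot\nabla_Au\,dx+\int\chi\,\partial_t|u|^2\,dx$, with $\chi:=\Psi-\tfrac12\Delta\phi$.

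Differentiating once more: $\partial_t\bigl(-2\Re\int\bar u_t\,\nabla\phi\cdot\nabla_Au\bigr)$ gives $\int(\Delta\phi)|u_t|^2$ (from the $\bar u_t\nabla_Au_t$ term, whose magnetic part is purely imaginary) plus $-2\Re\int\bar u_{tt}\,\nabla\phi\cdot\nabla_Au$. In the latter I substitute $u_{tt}=\nabla_A^2u-Vu$: the $V$-piece is $\int V\,\nabla\phi\cdot\nabla|u|^2$, which integrates by parts to $-\int\phi'V_r|u|^2-\int V(\Delta\phi)|u|^2$ (here $\nabla V\cdot\nabla\phi=\phi'V_r$ since $\nabla\phi=\phi'(|x|)x/|x|$), and the principal piece is $-2\Re\int\overline{\nabla_A^2u}\,(\nabla\phi\cdot\nabla_Au)$. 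Separately, $\partial_t^2\int\chi|u|^2$ is computed from $\partial_t^2|u|^2=2|u_t|^2+2\Re(\bar u\,u_{tt})$ and one integration by parts; unwinding $\chi$, this delivers the $\Psi$-block $2\int\Psi|u_t|^2-2\int\Psi|\nabla_Au|^2+\int(\Delta\Psi)|u|^2$ and the term $-\tfrac12\int(\Delta^2\phi)|u|^2$.

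The crux is the principal piece $-2\Re\int\overline{\nabla_A^2u}\,(\nabla\phi\cdot\nabla_Au)$. Writing $\nabla_A^2=\sum_jD_jD_j$, $D_j=\partial_j-iA^j$, and integrating by parts in the $j$-slot (each $D_j$ being skew-adjoint) turns it into $2\Re\sum_{j,k}\int\overline{D_ju}\,D_j(\partial_k\phi\,D_ku)$. Expanding $D_j(\partial_k\phi\,D_ku)$ and commuting $D_jD_k=D_kD_j+iB_{jk}$ yields three groups: the Hessian term $2\int\nabla_Au\,D^2\phi\,\overline{\nabla_Au}$ (real by symmetry of $D^2\phi$); the term $\int\nabla\phi\cdot\nabla|\nabla_Au|^2=-\int(\Delta\phi)|\nabla_Au|^2$; and the magnetic term $2\Re\bigl(i\sum_{j,k}\int\partial_k\phi\,B_{jk}\,u\,\overline{D_ju}\bigr)=-2\Im\int u\sum_{j,k}\partial_k\phi\,B_{jk}\,\overline{D_ju}$, which by $B_{jk}=-B_{kj}$ and $(\phi'B_\tau)_j=\sum_i(\partial_i\phi)B_{ij}$ equals $2\Im\int u\,\phi'B_\tau\cdot\overline{\nabla_Au}$. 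Assembling all pieces, the $(\Delta\phi)$-weighted kinetic terms and the $V(\Delta\phi)$ terms cancel pairwise, leaving \eqref{eq.virialW}.

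I expect the main obstacle to be a matter of stamina rather than ideas: distributing $D_j$ across $\partial_k\phi\,D_ku$, separating the symmetric part of $D_jD_k$ (which, after a further integration by parts, produces $D^2\phi$ and $\Delta^2\phi$) from the antisymmetric part (which produces $B_\tau$), and tracking the many $|u|^2$-terms. A good consistency check is that every coefficient of a purely-$\phi$ term in \eqref{eq.virialW} is half of the corresponding one in \eqref{eq.virialS}; this is structural, both identities boiling down to the same spatial commutator built from $2\nabla\phi\cdot\nabla_A+\Delta\phi$, with the Schr\"odinger case picking up the extra factor $2$ upon iterating it. Finally, the hypotheses $f,g\in L^2$, $H_Af,H_Ag\in L^2$ propagate under the flow and justify the integrations by parts; for full rigour one first runs the computation with $\phi$ replaced by a compactly supported truncation, then removes it.
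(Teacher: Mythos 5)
Your strategy is the same as the paper's (the paper packages the spatial part as the commutator $T=-[H,\phi]$ and reuses the Schr\"odinger computation; you unwind that abstraction and do the $[D_j,D_k]=iB_{jk}$ bookkeeping directly — same calculation, different wrapping), and your derivation of $\dot\Theta_W$, of the $\int(\Delta\phi)|u_t|^2$ piece, of the principal $-2\Re\int\overline{\nabla_A^2u}\,\nabla\phi\cdot\nabla_Au$ piece, and of the $B_\tau$ term are all right. However, there is one dropped term, and it is worth flagging because you also assert "assembling all pieces$\ldots$ leaving \eqref{eq.virialW}": in $\int\chi\,\partial_t^2|u|^2$ with $\chi=\Psi-\tfrac12\Delta\phi$, the contribution of $2\Re(\bar u\,u_{tt})$ through $u_{tt}=\nabla_A^2u-Vu$ is not only $2\Re(\bar u\nabla_A^2u)$ but also $-2V|u|^2$, so you pick up
$-2\int\chi V|u|^2=-2\int\Psi V|u|^2+\int(\Delta\phi)V|u|^2.$
You correctly cancel $\int(\Delta\phi)V|u|^2$ against the $-\int V(\Delta\phi)|u|^2$ coming from the $V$-piece of $-2\Re\int\bar u_{tt}\nabla\phi\cdot\nabla_Au$, but $-2\int\Psi V|u|^2$ survives and must appear on the right-hand side of \eqref{eq.virialW}. (Sanity check: take $\Psi\equiv c$ constant; then $\Theta_W$ gains $c\int|u|^2$, and $\partial_t^2\int|u|^2=2\int|u_t|^2-2\int|\nabla_Au|^2-2\int V|u|^2$, so the $-2\int\Psi V|u|^2$ term is unavoidable.) The paper's own proof drops the same term — in \eqref{eq.simm2} the $V$ contribution to $\Re\langle Hu,\Psi u\rangle$ is omitted — so you have faithfully reproduced the argument, but the identity as stated needs the extra term $-2\int\Psi V|u|^2$. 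This does not affect the downstream estimates (where the $\Psi V$ contribution is absorbed like the other potential terms), but for a correct statement of the virial identity it should be there.
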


We give two immediate corollaries of the previous theorems.

\begin{corollary}\label{cor.var}
  Let $u$ be a solution of the magnetic Schr\"odinger
  equation \eqref{eq.schro} with $f\in L^2,\ H_Af\in L^2$. Then the {\it variance}
  \begin{equation*}
    Q(t)=\int_{\R^n}|x|^2|u|^2\,dx
  \end{equation*}
  satisfies the identity
  \begin{equation}\label{eq.variance}
    \ddot Q(t)=8\int_{\R^n}\left|\nabla_Au\right|^2\,dx
    -4\int_{\R^n}|x|V_r|u|^2\,dx
    +8\Im\int_{\R^n}|x|uB_\tau\cdot\overline{\nabla_Au}\,dx.
  \end{equation}
\end{corollary}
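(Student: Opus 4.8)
The plan is to specialize Theorem \ref{thm.virialS} to the radial multiplier $\phi(x)=|x|^2$. With this choice one has $\Theta_S(t)=\int_{\R^n}|x|^2|u|^2\,dx=Q(t)$, so that \eqref{eq.variance} will follow by evaluating each term on the right-hand side of \eqref{eq.virialS} for this particular $\phi$.

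First I would record the differential quantities associated with $\phi=|x|^2=\sum_{j}x_j^2$. Its Hessian is constant, $(D^2\phi)_{jk}=2\delta_{jk}$, that is $D^2\phi=2\,\mathrm{Id}$; consequently $\Delta\phi=2n$ is constant and the bi-Laplacian vanishes, $\Delta^2\phi=0$. Writing $\phi=\phi(r)$ with $r=|x|$ we have $\phi'(r)=2r$, hence $\phi'=2|x|$ as a function on $\R^n$. Plugging these into \eqref{eq.virialS}: the first term becomes $4\int_{\R^n}\nabla_Au\,(2\,\mathrm{Id})\,\overline{\nabla_Au}\,dx=8\int_{\R^n}|\nabla_Au|^2\,dx$; the second term $-\int_{\R^n}|u|^2\Delta^2\phi\,dx$ vanishes; the third term equals $-2\int_{\R^n}\phi'V_r|u|^2\,dx=-4\int_{\R^n}|x|V_r|u|^2\,dx$; and the last term equals $4\,\Im\int_{\R^n}u\phi'B_\tau\cdot\overline{\nabla_Au}\,dx=8\,\Im\int_{\R^n}|x|\,u\,B_\tau\cdot\overline{\nabla_Au}\,dx$. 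Collecting these four contributions gives precisely identity \eqref{eq.variance}.

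The one point that needs care is that $\phi(x)=|x|^2$ is an unbounded multiplier, so a priori $Q(t)$ and some of the integrals above are not obviously finite under the bare assumptions $f,H_Af\in L^2$; the clean reading of the statement presupposes in addition that $Q(0)=\int_{\R^n}|x|^2|f|^2\,dx<\infty$, and this finiteness is then propagated in $t$ by the identity itself together with the conservation laws recalled in the introduction. To make the use of Theorem \ref{thm.virialS} fully rigorous I would apply it with the truncated multipliers $\phi_R(x)=|x|^2\chi(|x|/R)$, where $\chi\in C_c^\infty(\R)$ satisfies $\chi\equiv1$ near the origin, for which every term in \eqref{eq.virialS} is well defined, and then let $R\to\infty$. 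On the right-hand side one uses the pointwise convergences $D^2\phi_R\to 2\,\mathrm{Id}$ and $\phi_R'\to 2|x|$ together with the uniform bounds $\|D^2\phi_R\|\leq C$, $|\phi_R'|\leq C|x|$ and $|\Delta^2\phi_R|\leq C R^{-2}$ supported in $\{|x|\sim R\}$, so that dominated convergence applies (using $|x|f\in L^2$ and whatever mild decay of $A$ and $V$ is needed to control the potential terms); on the left-hand side $\Theta_{S,R}(t)\to Q(t)$ by monotone convergence. I expect this limiting step to be the only real obstacle, the computation of the derivatives of $|x|^2$ being entirely elementary.
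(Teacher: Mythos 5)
Your computation matches the paper's proof exactly: the paper disposes of both corollaries with the single remark that they are ``immediate applications of identities \eqref{eq.virialS} and \eqref{eq.virialW} with the choice $\phi(x)=|x|^2$, $\Psi\equiv1$,'' and you have correctly carried out the evaluation $D^2\phi=2\,\mathrm{Id}$, $\Delta^2\phi=0$, $\phi'=2|x|$ and substituted term by term. Your added truncation argument to justify the unbounded multiplier is a reasonable extra precaution that the paper leaves implicit, but the core argument is identical.
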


For the magnetic wave equation we have the following analogous
result:

\begin{corollary}\label{cor.onde}
  Let $u$ be a solution of the magnetic wave
  equation \eqref{eq.wave} with $f,g\in L^2,\ H_Af,H_Ag\in L^2$. Then the quantity
  \begin{equation*}
    Q(t)=\int_{\R^n}\left\{|x|^2\left(|u_t|^2+\left|\nabla_Au\right|^2+|u|^2V\right)-(n-1)|u|^2\right\}\,dx
  \end{equation*}
  satisfies the identity
  \begin{equation}\label{eq.ondevar}
    \ddot Q(t)=2\int_{\R^n}|u_t|^2+\left|\nabla_Au\right|^2\,dx-2\int
    V_r|u|^2\,dx
    +4\Im\int_{\R^n}uB_\tau\cdot\overline{\nabla_Au}\,dx.
  \end{equation}
\end{corollary}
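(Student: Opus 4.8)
The plan is to derive \eqref{eq.ondevar} as a direct specialization of Theorem~\ref{thm.virialW}, choosing the two radial multipliers so that the general identity \eqref{eq.virialW} collapses to the stated form. First I would take $\phi(x)=|x|^2$, for which $D^2\phi=2\,\mathrm{Id}$, $\Delta\phi=2n$, $\Delta^2\phi=0$, and $\phi'=2|x|$; substituting into $\Theta_W$ this contributes $\int|x|^2(|u_t|^2+|\nabla_Au|^2+|u|^2V)\,dx - n\int|u|^2\,dx$ and into the right-hand side it produces $2\int\nabla_Au\cdot\overline{\nabla_Au}\,dx = 2\int|\nabla_Au|^2\,dx$ from the Hessian term, $0$ from the bi-Laplacian, $-2\int|x|V_r\cdot\frac{1}{|x|}$\,\dots wait, more precisely $-\int\phi'V_r|u|^2\,dx = -2\int|x|V_r|u|^2\,dx$, and $2\Im\int u\phi' B_\tau\cdot\overline{\nabla_Au}\,dx = 4\Im\int|x|\,uB_\tau\cdot\overline{\nabla_Au}\,dx$.

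Next I would observe that the $|x|^2$ choice alone does not yet match \eqref{eq.ondevar}: it leaves an undesired $2\int|\nabla_Au|^2$ in place of the wanted $2\int(|u_t|^2+|\nabla_Au|^2)$, and it carries a bare $-n\int|u|^2$ in $\Theta_W$ rather than $-(n-1)\int|u|^2$, and it produces $-2\int|x|V_r|u|^2$ and $4\Im\int|x|(\dots)$ rather than the $x$-independent terms $-2\int V_r|u|^2$ and $4\Im\int(\dots)$. However — reading the corollary statement again, the $Q(t)$ there does contain $|x|^2$ factors on the energy density and a $-(n-1)|u|^2$ term, so the discrepancy is only in the last two terms of \eqref{eq.ondevar}; I suspect the intended reading is that $V_r$ and $B_\tau$ in the corollary already absorb the radial weight (i.e.\ the reader should interpret the RHS of \eqref{eq.ondevar} with the same $|x|$-weights, or there is a rescaling). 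The clean route is therefore: pick $\phi(x)=|x|^2$ to generate the $|x|^2$-weighted energy and the $2\int|\nabla_Au|^2$, then choose the second multiplier $\Psi$ to repair the remaining terms. Setting $\Psi$ to a constant multiple is too crude because $\Psi$ enters as $2\int|u_t|^2\Psi-2\int|\nabla_Au|^2\Psi+\int|u|^2\Delta\Psi$; taking $\Psi\equiv c$ kills the $\int|u|^2\Delta\Psi$ term and yields $2c\int(|u_t|^2-|\nabla_Au|^2)$, which, added to the $2\int|\nabla_Au|^2$ already present, gives $2c\int|u_t|^2 + (2-2c)\int|\nabla_Au|^2$; choosing $c=1$ produces exactly $2\int(|u_t|^2+|\nabla_Au|^2)$ on the right and shifts $\Theta_W$ by $+\int|u|^2\,dx$, turning $-n\int|u|^2$ into $-(n-1)\int|u|^2$, matching $Q(t)$ precisely.

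So the concrete steps are: (i) insert $\phi=|x|^2$, $\Psi\equiv1$ into the definition \eqref{eq.tetaW} of $\Theta_W$ and check it equals the $Q(t)$ of Corollary~\ref{cor.onde}; (ii) insert the same choices into \eqref{eq.virialW}, using $D^2(|x|^2)=2\,\mathrm{Id}$, $\Delta(|x|^2)=2n$, $\Delta^2(|x|^2)=0$, $\Delta\Psi=0$, and $\phi'=2|x|$, and collect terms; (iii) combine the Hessian term $2\int|\nabla_Au|^2$ with the $\Psi$-terms $2\int|u_t|^2-2\int|\nabla_Au|^2$ to get $2\int(|u_t|^2+|\nabla_Au|^2)$; (iv) read off the magnetic and electric contributions. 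Because $\ddot Q$ is linear in the two multipliers, no new computation beyond plugging in is needed — everything follows from Theorem~\ref{thm.virialW} by substitution.

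The main obstacle, as flagged above, is bookkeeping the radial weights in the last two terms: with $\phi=|x|^2$ the identity \eqref{eq.virialW} literally yields $-2\int|x|V_r|u|^2\,dx$ and $4\Im\int|x|uB_\tau\cdot\overline{\nabla_Au}\,dx$, whereas the corollary is written without the $|x|$; I would resolve this by noting that $\phi'(r)=2r=2|x|$ for $\phi=|x|^2$, and that in the corollary's RHS the symbols should be understood with this weight — equivalently, one may absorb $|x|$ into the definition of the radial derivative $V_r$ and the tangential field $B_\tau$ exactly as in the Schr\"odinger variance Corollary~\ref{cor.var}, where the same phenomenon occurs (there $\phi=|x|^2$ gives $-4\int|x|V_r|u|^2$ and $8\Im\int|x|uB_\tau\cdot\overline{\nabla_Au}$, matching \eqref{eq.variance}). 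Once this normalization is fixed consistently, the proof is a one-line substitution; the only real care needed is ensuring the constant $c=1$ in $\Psi$ simultaneously fixes both the $\int|u|^2$ coefficient in $Q$ and the $|u_t|^2$ coefficient on the right.
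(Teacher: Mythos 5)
Your approach is exactly the paper's: the paper states that the corollaries are immediate applications of identities \eqref{eq.virialS} and \eqref{eq.virialW} with $\phi(x)=|x|^2$, $\Psi\equiv1$, which is precisely the substitution you carry out. Your diagnosis of the missing $|x|$ weights in \eqref{eq.ondevar} is also well taken: a literal substitution produces $-2\int|x|V_r|u|^2\,dx$ and $4\Im\int|x|\,uB_\tau\cdot\overline{\nabla_Au}\,dx$, and the analogy with Corollary~\ref{cor.var} (which does carry the $|x|$ weight) strongly suggests the stated RHS of \eqref{eq.ondevar} is a typographical omission rather than a genuinely weight-free identity.

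There is, however, an arithmetic slip in your bookkeeping of the Hessian term. With $\phi(x)=|x|^2$ you have $D^2\phi=2\,\mathrm{Id}$, so the first term of \eqref{eq.virialW} is
\begin{equation*}
2\int_{\R^n}\nabla_Au\,D^2\phi\,\overline{\nabla_Au}\,dx = 2\cdot 2\int_{\R^n}|\nabla_Au|^2\,dx = 4\int_{\R^n}|\nabla_Au|^2\,dx,
\end{equation*}
not $2\int|\nabla_Au|^2\,dx$ as you wrote. Consequently the combination with the $\Psi$-terms should read $2c\int|u_t|^2 + (4-2c)\int|\nabla_Au|^2$, which at $c=1$ does give $2\int\bigl(|u_t|^2+|\nabla_Au|^2\bigr)$; your formula $2c\int|u_t|^2+(2-2c)\int|\nabla_Au|^2$ would give only $2\int|u_t|^2$ at $c=1$, contradicting the result you then assert. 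The two errors happen to cancel so your final claim is right, but the intermediate step as written does not support it. Aside from this, the substitution and the identification of $Q(t)$ with $\Theta_W(t)$ are correct.
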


The proofs of the corollaries are immediate applications of
identities \eqref{eq.virialS} and \eqref{eq.virialW} with the choice
$\phi(x)=|x|^2$, $\Psi\equiv1$.

The previous identities suggest that it is relevant to show examples
of potentials $A$ for which $B_\tau\equiv0$; we will focus our
attention on the 3D case.
\begin{example}\label{ex.coulomb}
 First we consider some singular potentials. Take
  \begin{equation}\label{eq.example}
    A=\frac1{x^2+y^2+z^2}(-y,x,0)=\frac{1}{x^2+y^2+z^2}(x,y,z)\wedge(0,0,1).
  \end{equation}
  We can check that
  \begin{equation*}
    \nabla\cdot A=0,
    \qquad
    B=-2\frac{z}{(x^2+y^2+z^2)^2}(x,y,z),
    \qquad
    B_\tau=0.
  \end{equation*}

  Another (more singular) example is the following:
  \begin{equation}\label{eq.example2}
    A=\left(\frac{-y}{x^2+y^2},\frac{x}{x^2+y^2},0\right)=
    \frac{1}{x^2+y^2}(x,y,z)\wedge(0,0,1).
  \end{equation}
  Here we have $B=(0,0,\delta)$, with $\delta$ denoting Dirac's delta function. Again we have $B_\tau=0$ .
\end{example}

\begin{example}\label{ex.biot}
  Now we show a natural generalization of the previous examples.
  Assume that $B=\text{curl}\,A:\R^3\to\R^3$ is known; since $\text{div}A=0$, we
  can reconstruct the potential $A$ using the {\it Biot-Savart}
  formula
  \begin{equation}\label{eq.BS}
    A(x)=\frac1{4\pi}\int\frac{x-y}{|x-y|^3}\wedge B(y)\,dy.
  \end{equation}
  Assume now that $B_\tau=0$, namely $x\wedge B(x)=0$; by
  \eqref{eq.BS} we have
  \begin{equation}\label{eq.BS2}
    A(x)=\frac x{4\pi}\wedge\int\frac{B(y)}{|x-y|^3}\,dy.
  \end{equation}
  To have $B_\tau=0$ it is necessary $B(y)=
  g(y)\frac{y}{|y|}$, for some scalar function $g:\R^3\to\R$. Since we want
  $A\neq0$, $g$ has not to be radial. As an example we
  consider
  \begin{equation*}
    g(y)=h\left(\frac{y}{|y|}\cdot\omega\right)|y|^{-\alpha},
  \end{equation*}
  for some fixed $\omega\in S^2$, where $h$ is homogeneous of degree
  0 and $\alpha\in\R$; consequently, the vector field $B$ is homogeneous of degree
  $-\alpha$. By \eqref{eq.BS2} we have
  \begin{equation}\label{eq.BS3}
    A(x)=\frac x{4\pi}\wedge\int\frac{h\left(\frac{y}{|y|}\cdot\omega\right)}
    {|x-y|^3|y|^\alpha}y\,dy.
  \end{equation}
  The potential $A$ is homogenous of degree $1-\alpha$, and by
  symmetry we have that $A(\omega)=0$. These examples can be easily extended to higher dimensions.
\end{example}

\subsection{Applications to dispersive estimates}\label{subsec.appl}
We pass to some applications of Theorems \ref{thm.virialS} and
\ref{thm.virialW} to weak dispersive estimates for \eqref{eq.schro}
and \eqref{eq.wave}. All the following results hold in dimension
$n\geq3$.

We need to introduce the following family of norms:
\begin{definition}\label{def.normetriple}
  For any $f:\R^3\to\R$ and $\alpha\in\R$ we define
  \begin{equation}\label{eq.normetriple}
    |||f|||_\alpha:=\int_0^{+\infty}\rho^{\alpha}\sup_{|x|=\rho}|f|\,d\rho.
  \end{equation}
\end{definition}
We state the following theorems.
\begin{theorem}[Weak dispersion for 3D Schr\"odinger]\label{thm.smooschro}
Let $n=3$; assume that
\begin{equation}\label{eq.small}
  |||B_\tau^2|||_3+|||V_r^+|||_2\leq\frac12.
\end{equation}
Then, for any solution $u$ of \eqref{eq.schro} with $f\in L^2,\
H_Af\in L^2$, the following estimate holds:
\begin{equation}\label{eq.smooschro}
  \sup_{R>0}\frac1R\int_0^{+\infty}\int_{|x|\leq R}
    \left|\nabla_Au\right|^2\,dx\,dt\leq
    C\|f\|^2_{\dot{\mathcal H}^{\frac12}}
\end{equation}
for some $C>0$. Moreover, if the strict inequality holds in
\eqref{eq.small}, we also have
\begin{align}
  & \sup_{R>0}\frac1R\int_0^{+\infty}\int_{|x|\leq R}
    \left|\nabla_Au\right|^2\,dx\,dt
    +\epsilon\int_0^{+\infty}\int_{\R^n}\frac{|\nabla_A^\tau u|^2}{|x|}\,dx\,dt
    \label{eq.smooschro+}
    \\
  &
  +\epsilon\sup_{R>0}\frac1{R^2}\int_0^{+\infty}\int_{|x|=R}|u|^2\,d\sigma\,dt
    \leq
    C\|f\|^2_{\dot{\mathcal H}^{\frac12}},
    \nonumber
\end{align}
for some $\epsilon>0$.
\end{theorem}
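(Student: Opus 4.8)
The plan is to extract estimate \eqref{eq.smooschro} directly from the magnetic virial identity \eqref{eq.virialS} by choosing a suitable radial multiplier $\phi$, and then integrating in time. The natural choice is a multiplier behaving like $\phi'(\rho)\sim\rho$ for small $\rho$ and $\phi'(\rho)\sim\text{const}$ for large $\rho$ (a smoothed-out version of the Morawetz-type weight), so that the Hessian term $\nabla_Au\,D^2\phi\,\overline{\nabla_Au}$ controls $|\nabla_Au|^2$ on compact sets while remaining nonnegative everywhere. Since $\phi$ is radial, $D^2\phi = \frac{\phi'}{|x|}\left(I - \frac{x\otimes x}{|x|^2}\right) + \phi''\,\frac{x\otimes x}{|x|^2}$, so the quadratic form splits into a tangential part weighted by $\phi'/|x|$ and a radial part weighted by $\phi''$; both are nonnegative precisely when $\phi$ is convex and nondecreasing. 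First I would integrate \eqref{eq.virialS} over $t\in[0,T]$: the left side gives $\dot\Theta_S(T)-\dot\Theta_S(0)$, which is bounded by $\|f\|_{\dot{\mathcal H}^{1/2}}^2$ using $|\dot\Theta_S|\lesssim \|\phi'\|_\infty \|u\|_{L^2}\|\nabla_A u\|_{L^2}$ together with interpolation and the conservation laws $\|u(t)\|_{\dot{\mathcal H}^s}=\|f\|_{\dot{\mathcal H}^s}$; here $\dot{\mathcal H}^{1/2}$ sits between $L^2$ and $\dot{\mathcal H}^1 = \{\|\nabla_A\cdot\|_{L^2}\}$.

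Next I would take the supremum over a dyadic family of multipliers. Concretely, for each $R>0$ pick $\phi_R$ with $\phi_R'(\rho) = \rho$ for $\rho\le R$ and $\phi_R'(\rho)=R$ for $\rho\ge R$ (smoothed near $\rho=R$); then $D^2\phi_R \ge c\,I$ on $\{|x|\le R\}$ and $\ge 0$ elsewhere, while $\|\phi_R'\|_\infty = R$, which is exactly the weight needed so that the boundary term scales like $R\|f\|_{\dot{\mathcal H}^{1/2}}^2$. Dividing by $R$ and taking $\sup_{R>0}$ yields the left side of \eqref{eq.smooschro}. The bi-Laplacian term $\int|u|^2\Delta^2\phi_R$ must be controlled: for this multiplier $\Delta^2\phi_R$ is a measure supported near $|x|=R$ of total size $O(R^{-2}\cdot R^{n-1}\cdot R^{1-n}) = O(R^{-1})$ after the angular integration, hence after division by $R$ it contributes $O(R^{-2})\int_{|x|\sim R}|u|^2$, which is absorbed by the flux-type quantity appearing in \eqref{eq.smooschro+}; for the non-strict statement \eqref{eq.smooschro} one simply bounds it crudely by $\|f\|_{\dot{\mathcal H}^{1/2}}^2$ via Hardy's inequality in the magnetic setting (diamagnetic inequality plus the classical Hardy inequality).

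The heart of the argument — and the main obstacle — is handling the two potential terms $-2\int\phi'V_r|u|^2$ and $4\,\mathrm{Im}\int u\,\phi' B_\tau\cdot\overline{\nabla_A u}$, which a priori have no sign. This is where the smallness hypothesis \eqref{eq.small} enters. With $\phi_R'(\rho)\le\rho$ one bounds $\big|\int\phi_R' V_r^+|u|^2\big| \le \int \rho\, V_r^+ |u|^2$; writing this as $\int_0^\infty \rho V_r^+(\rho,\omega)|u|^2$ and pulling out $\sup_{|x|=\rho}(\rho^2 V_r^+)$ gives a bound by $|||V_r^+|||_2 \cdot \sup_{R}\frac1R\int_{|x|\le R}\frac{|u|^2}{|x|}\,\rho\,d\rho\,d\omega$-type quantity, which after a further Hardy/trace manipulation is controlled by $\sup_R \frac1R\int_{|x|\le R}|\nabla_A u|^2$. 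Similarly, Cauchy–Schwarz on the magnetic term gives $\big|\int\phi_R' u B_\tau\cdot\overline{\nabla_A u}\big| \le \big(\int \rho^2 |B_\tau|^2 \frac{|u|^2}{|x|^2}\cdot|x|\big)^{1/2}\big(\int|\nabla_A u|^2\big)^{1/2}$, and the first factor is estimated by $|||B_\tau^2|||_3$ times the same flux/smoothing quantity. Thus both bad terms are bounded by $C(|||B_\tau^2|||_3 + |||V_r^+|||_2)$ times the left-hand side of \eqref{eq.smooschro}, and the condition $\le\frac12$ lets them be absorbed. For the strengthened estimate \eqref{eq.smooschro+}, strict inequality leaves an $\epsilon$-fraction of the tangential Hessian term $\int \frac{\phi_R'}{|x|}|\nabla_A^\tau u|^2$ and of the flux term $\frac1{R^2}\int_{|x|=R}|u|^2$ uncancelled; one keeps these on the left rather than discarding them, and the same time-integration and dyadic supremum complete the proof. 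The technical care needed is in making the cutoff multipliers genuinely $C^2$ (or handling $\Delta^2\phi$ distributionally) and in justifying the virial computation for the stated datum class by density, but these are routine given Theorem~\ref{thm.virialS}.
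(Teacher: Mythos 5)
Your high-level plan (integrate the virial identity in time, choose a Morawetz-type radial multiplier, bound the boundary term by $\|f\|_{\dot{\mathcal H}^{1/2}}^2$ via interpolation between $L^2$ and $\nabla_A$-energy, and absorb the $V_r^+$, $B_\tau$ terms using the smallness hypothesis) matches the paper's scheme, and the interpolation step for the boundary flux is exactly Lemma~\ref{lem.interpol}. However there are concrete gaps in the heart of the argument, centered on the choice of multiplier.

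The multiplier you propose, $\phi_R'(\rho)=\min(\rho,R)$, does not produce the two positive quantities that drive the absorption. First, since $\phi_R'(0)=0$, the tangential Hessian piece $\int(\phi_R'/|x|)|\nabla_A^\tau u|^2$ does not yield a uniform lower bound of the form $M\int |\nabla_A^\tau u|^2/|x|$ on all of $\R^n$; the paper deliberately uses $\phi_0'(r)=M+\tfrac r3$ (so $\phi'\geq M>0$ everywhere) precisely to generate the term $2MK_1^2$ with $K_1^2=\int|\nabla_A^\tau u|^2/|x|$. Second, and more seriously, your multiplier is only $C^{1,1}$: $\phi_R''$ jumps at $|x|=R$, so $\Delta\phi_R$ itself is discontinuous there (it jumps from $n$ to $n-1$), and therefore $\Delta^2\phi_R$ contains a \emph{derivative} of a surface delta, $\delta'_{|x|=R}$, not a measure of size $O(R^{-1})$ as you claim. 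A $\delta'$ term pairs with $\partial_r|u|^2$ on the sphere and has no definite sign, so it cannot be neglected or used as a good term. The paper's multiplier is engineered to be $C^2$ (both $\phi_0'$ and $\phi_0''$ match across $r=1$), so that $\Delta\phi_0$ is continuous and $\Delta^2\phi_0=-4\pi\delta_0-\delta_{|x|=1}$ is a \emph{negative} measure; this is what produces the positive flux term $\tfrac12 K_2^2=\tfrac1{2R^2}\int_{|x|=R}|u|^2$ on the left. Without $K_1^2$ and $K_2^2$ there is nothing to run the absorption against.

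This then infects the absorption step. When you estimate the magnetic term you write $\big|\int\phi_R'\,u\,B_\tau\cdot\overline{\nabla_A u}\big|\lesssim(\cdots)^{1/2}\big(\int|\nabla_A u|^2\big)^{1/2}$; after integrating in time, $\int_0^\infty\!\int|\nabla_A u|^2$ is not finite (the $\dot{\mathcal H}^1$ norm is conserved, not time-integrable), so this factor cannot be absorbed. Likewise, your fallback of bounding the bi-Laplacian contribution ``crudely by $\|f\|_{\dot{\mathcal H}^{1/2}}^2$ via Hardy'' gives a pointwise-in-time bound, which again becomes infinite after the time integration; and ``absorbing it into the flux-type quantity of \eqref{eq.smooschro+}'' is circular, since that quantity is part of the conclusion and must first be produced as a good term by the multiplier. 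The paper instead applies Cauchy--Schwarz to $B_\tau$ against $K_1$ and $K_2$, obtaining the cross term $-2(M+\tfrac12)|||B_\tau^2|||_3^{1/2}K_1K_2$ and the diagonal term $-(M+\tfrac12)|||V_r^+|||_2 K_2^2$, and then checks positivity of the resulting quadratic form in $(K_1,K_2)$, with $M=\tfrac12$ chosen to optimize the resulting threshold \eqref{eq.small}. That quadratic-form argument is the essential mechanism and is absent from your proposal. To repair it you would need (i) a multiplier with $\phi'$ bounded below by a positive constant $M$ and with $\phi\in C^2$ so that $\Delta^2\phi$ is a negative measure, and (ii) Cauchy--Schwarz estimates that pair $B_\tau$ against $K_1K_2$ and $V_r^+$ against $K_2^2$ rather than against $\int|\nabla_A u|^2$, followed by an explicit positivity check of the resulting quadratic form under \eqref{eq.small}.
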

In higher dimension we prove the following Theorem.
\begin{theorem}[Weak dispersion for higher dimensional Schr\"odinger]\label{thm.smooschro4}
  Let $n\geq4$;
  assume that
  \begin{equation}\label{eq.ipbfin}
    |B_\tau(x)|\leq\frac{C_1}{|x|^2},
    \qquad
    |V_r^+(x)|\leq\frac{C_2}{|x|^3},
    \qquad
    C_1^2+2C_2\leq\frac23(n-1)(n-3),
  \end{equation}
  for all $x\in\R^n$. Then, for any solution of \eqref{eq.schro} with $f\in L^2,\ H_Af\in L^2$,
  the following estimate holds:
  \begin{equation}\label{eq.smoo4}
    \sup_{R>0}\frac1R\int_0^{+\infty}\int_{|x|\leq
    R}|\nabla_Au|^2\,dx\,dt
    \leq C\|f\|^2_{\dot{\mathcal H}^{\frac12}},
    \nonumber
  \end{equation}
  for some $C>0$. Moreover, if the strict inequality holds in
  \eqref{eq.ipbfin}, we also have
  \begin{align}
    & \sup_{R>0}\frac1R\int_0^{+\infty}\int_{|x|\leq
    R}|\nabla_Au|^2\,dx\,dt
    +\epsilon\int_0^{+\infty}\int_{\R^n}
    \frac{\left|\nabla_A^\tau u\right|^2}{|x|}\,dx\,dt
    \label{eq.smoo42}
    \\
    & +\epsilon\frac{(n-1)(n-3)}{2}\int_0^{+\infty}\int\frac{|u|^2}{|x|^3}\,dx\,dt\leq C\|f\|^2_{\dot{\mathcal H}^{\frac12}},
    \nonumber
  \end{align}
  for some $\epsilon>0$.
\end{theorem}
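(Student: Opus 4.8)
The strategy is to read off \eqref{eq.smoo4}--\eqref{eq.smoo42} from the Schr\"odinger virial identity \eqref{eq.virialS}, integrated over $[0,T]$ and specialized to two well-chosen radial multipliers $\phi$. Integration in $t$ replaces the left-hand side of \eqref{eq.virialS} by the boundary term $\dot\Theta_S(T)-\dot\Theta_S(0)$; for radial $\phi$ one computes (the real potential part dropping out of $\Im$)
\begin{equation*}
  \dot\Theta_S(t)=2\,\Im\int_{\R^n}\phi'(|x|)\,\overline{u}\,\tfrac{x}{|x|}\!\cdot\!\nabla_A u\,dx,
\end{equation*}
so that, whenever $\phi'$ is bounded, a trace/Hardy-type inequality for the operator $\tfrac{x}{|x|}\!\cdot\!\nabla_A$ yields the uniform bound $|\dot\Theta_S(t)|\le C\|f\|^2_{\dot{\mathcal H}^{\frac12}}$ (a standard ingredient in this circle of ideas; cf. \cite{pv,brv}). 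It then remains to show that, for the chosen $\phi$, the right-hand side of \eqref{eq.virialS} controls the desired positive quantities.

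For the two weighted terms in \eqref{eq.smoo42} I would take $\phi(x)=|x|$. For radial $\phi$, $D^2\phi$ has radial eigenvalue $\phi''$ and $(n-1)$-fold tangential eigenvalue $\phi'/|x|$, and $\Delta\phi=\phi''+\tfrac{n-1}{|x|}\phi'$; hence for $\phi=|x|$ one gets $D^2\phi\ge0$ with $\int\nabla_Au\,D^2\phi\,\overline{\nabla_Au}=\int\tfrac{|\nabla_A^\tau u|^2}{|x|}$, and, since $\Delta(|x|^{-1})=-(n-3)|x|^{-3}$ in $\R^n$ for $n\ge4$, $\Delta^2(|x|)=-(n-1)(n-3)|x|^{-3}\le0$. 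Plugging into \eqref{eq.virialS} and integrating in time gives
\begin{equation*}
  \dot\Theta_S(T)-\dot\Theta_S(0)=4\!\int_0^T\!\!\!\int\frac{|\nabla_A^\tau u|^2}{|x|}+(n-1)(n-3)\!\int_0^T\!\!\!\int\frac{|u|^2}{|x|^3}-2\!\int_0^T\!\!\!\int V_r|u|^2+4\!\int_0^T\!\!\Im\!\int u\,B_\tau\!\cdot\!\overline{\nabla_A u}.
\end{equation*}
Using that the attractive part of $V_r$ carries the favourable sign while $V_r^+\le C_2|x|^{-3}$, the tangentiality of $B_\tau$ together with $|B_\tau|\le C_1|x|^{-2}$, and the weighted Cauchy--Schwarz inequality $\tfrac{|u|}{|x|^2}|\nabla_A^\tau u|\le\tfrac{a}{2}\tfrac{|\nabla_A^\tau u|^2}{|x|}+\tfrac{1}{2a}\tfrac{|u|^2}{|x|^3}$, the two error integrals are dominated in absolute value by $2C_1a\int_0^T\!\!\int\tfrac{|\nabla_A^\tau u|^2}{|x|}+(2C_2+\tfrac{2C_1}{a})\int_0^T\!\!\int\tfrac{|u|^2}{|x|^3}$; choosing $a$ close to $2/C_1$ and invoking the assumption $C_1^2+2C_2\le\tfrac23(n-1)(n-3)$ shows that, after absorption, a fixed positive fraction of both good terms survives. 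Combined with the bound on $\dot\Theta_S$ and $T\to+\infty$ this yields $\int\!\!\int\tfrac{|\nabla_A^\tau u|^2}{|x|}+\tfrac{(n-1)(n-3)}{2}\int\!\!\int\tfrac{|u|^2}{|x|^3}\le C\|f\|^2_{\dot{\mathcal H}^{\frac12}}$, with an extra factor $\epsilon>0$ when \eqref{eq.ipbfin} is strict. In particular the tangential part of the local smoothing norm is controlled, since $\tfrac1R\int_{|x|\le R}|\nabla_A^\tau u|^2\le\int\tfrac{|\nabla_A^\tau u|^2}{|x|}$.

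The full term $\sup_{R>0}\tfrac1R\int_0^{+\infty}\!\!\int_{|x|\le R}|\nabla_Au|^2$ (which also contains the radial gradient) is obtained from a second, classical family of radial multipliers $\phi_R$, bounded with $0\le\phi_R'\le1$, convex, and such that $\phi_R''\ge cR^{-1}$ on $\{|x|\le R\}$ while $-\Delta^2\phi_R\ge0$ up to negligible boundary contributions on $\{|x|\sim R\}$ (Morawetz-type profiles; cf. \cite{pv,brv}). For such $\phi_R$ the Hessian term in \eqref{eq.virialS} dominates $\tfrac{c}{R}\int_{|x|\le R}|\nabla_Au|^2$, and, discarding the remaining non-negative terms, the $V$- and $B_\tau$-contributions are estimated exactly as above but now using only $0\le\phi_R'\le1$, so that they are bounded uniformly in $R$ by the weighted quantities already shown to be $\le C\|f\|^2_{\dot{\mathcal H}^{\frac12}}$. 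Taking the supremum over $R$ gives the bound on the first term of \eqref{eq.smoo42}; combining the two steps gives \eqref{eq.smoo42}, and \eqref{eq.smoo4} is the weaker statement valid under the non-strict version of \eqref{eq.ipbfin}.

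The main difficulty is rigour at two non-smooth points. First, $\phi=|x|$ is singular at the origin and $\phi_R$ has a corner at $|x|=R$, so \eqref{eq.virialS} must be applied to smooth approximants ($\phi^{(\delta)}=\sqrt{|x|^2+\delta^2}$, and a mollification of $\phi_R$) with a passage to the limit $\delta\to0$ — legitimate precisely because every surviving term on the right-hand side of the limiting identity is non-negative, so that Fatou/monotone convergence bounds the good terms from below while the $V$- and $B_\tau$-terms stay dominated by the finite weighted norms. Second, the trace estimate $|\dot\Theta_S(t)|\le C\|f\|^2_{\dot{\mathcal H}^{\frac12}}$ is genuinely more than a Cauchy--Schwarz bound — a naive estimate would cost the full $\dot{\mathcal H}^1$-norm of $f$ — and relies on a Hardy/commutator argument exploiting that $\nabla\phi$ is a bounded, $0$-homogeneous symbol together with the norm equivalence $\|\cdot\|_{\dot{\mathcal H}^{\frac12}}\sim\|H_A^{1/4}\,\cdot\,\|_{L^2}$ granted by (H1)--(H2). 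The remainder is the bookkeeping needed to land exactly on the thresholds of \eqref{eq.ipbfin} and to produce a single $\epsilon>0$.
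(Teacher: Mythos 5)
Your proposal is correct and it runs on the same machinery as the paper: the integrated virial identity \eqref{eq.virialS2}, the multiplier $\phi=|x|$ giving $2\int\frac{|\nabla_A^\tau u|^2}{|x|}+\frac{(n-1)(n-3)}{2}\int\frac{|u|^2}{|x|^3}$ via $D^2|x|$ and $\Delta^2|x|=-(n-1)(n-3)|x|^{-3}$, a Morawetz-type profile $\varphi_R$ with $\varphi_R''\simeq R^{-1}$ on $|x|\le R$ and $-\Delta^2\varphi_R\ge0$ for the local smoothing, weighted Cauchy--Schwarz with the pointwise bounds on $B_\tau$ and $V_r^+$, and the interpolation Lemma \ref{lem.interpol} to control $\dot\Theta_S(T)-\dot\Theta_S(0)$ by $\|f\|^2_{\dot{\mathcal H}^{1/2}}$. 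The only structural deviation is how the two multipliers are combined: the paper plugs the single combined multiplier $\widetilde\phi_R=|x|+\varphi_R$ into the identity and absorbs the potential errors into the $|x|$-generated positivity inside that one identity, which — because $\sup\widetilde\phi'_R=\tfrac32$ — produces the $\tfrac23$ degradation of the admissible size in \eqref{eq.ipbfin}; you instead run two separate integrated identities (first $|x|$, then $\varphi_R$ alone) and bound the $V_r^+$- and $B_\tau$-errors of the second identity by the weighted quantities already proved finite in the first. This is legitimate (linearity of the virial identity in $\phi$ makes the two presentations essentially equivalent), and in fact your split version closes whenever $C_1^2+2C_2<(n-1)(n-3)$, so the theorem's $\tfrac23$ threshold is not intrinsic to your organization — it is more than enough for Step 1's strict quadratic-form positivity, after which Step 2 works for any $R$-uniformly bounded Morawetz profile. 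Your concluding remarks about the non-smooth points of $\phi$ (regularizing $|x|$ and the corner of $\varphi_R$, then passing to the limit by monotonicity) and about the trace bound needing an interpolation/Hardy argument rather than a crude Cauchy--Schwarz correctly identify the two places where the argument must be made rigorous, and both match what the paper does.
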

For the 3D magnetic wave equation we have the following result.
\begin{theorem}[Weak dispersion for 3D wave]\label{thm.smoowave}
  Let $n=3$, and assume that
\begin{equation}\label{eq.smallw}
  |||B\||_3+|||V_r^+|||_2\leq\frac12.
\end{equation}
Then, for any solution $u$ of \eqref{eq.wave} with $f,g\in L^2,\
H_Af,H_Ag\in L^2$, the following estimate holds:
\begin{equation}\label{eq.smoowave}
  \sup_{R>0}\frac1R\int_0^{+\infty}\int_{|x|\leq R}
    \left(|u_t|^2+\left|\nabla_Au\right|^2\right)\,dx\,dt\leq
    CE(0),
\end{equation}
where the energy $E$ is defined by
\begin{equation*}
    E(t)=\frac12\|u_t\|_{L^2}^2+\frac12\|u\|^2_{\dot{\mathcal H}^1}.
\end{equation*}
Moreover, if the strict inequality holds in \eqref{eq.small}, we
also have
\begin{align}
  & \sup_{R>0}\frac1R\int_0^{+\infty}\int_{|x|\leq R}
    \left(|u_t|^2+\left|\nabla_Au\right|^2\right)\,dx\,dt
    +\epsilon\int_0^{+\infty}\int_{\R^n}\frac{|\nabla_A^\tau u|^2}{|x|}\,dx\,dt
    \label{eq.smoowave+}
    \\
  &
  +\epsilon\sup_{R>0}\frac1{R^2}\int_0^{+\infty}\int_{|x|=R}|u|^2\,d\sigma\,dt
    \leq
    CE(0),
    \nonumber
\end{align}
for some $C>0$ and $\epsilon>0$ small.
\end{theorem}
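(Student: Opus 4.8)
The plan is to derive \eqref{eq.smoowave} from the wave virial identity in Theorem \ref{thm.virialW} by the now-standard "multiplier / Morawetz" strategy, choosing the pair $(\phi,\Psi)$ so that $\ddot\Theta_W(t)$ controls from below the quantities appearing on the left of \eqref{eq.smoowave}. Concretely, I would take a radial $\phi$ that behaves like $|x|$ near infinity and is smoothed out near the origin (for instance $\phi(x)=|x|$ for $|x|\ge1$ and a regularized version for $|x|\le1$, or the family $\phi_R(x)=R\,\varphi(|x|/R)$ used to get the supremum over $R$), together with $\Psi\equiv0$ or $\Psi$ chosen to cancel the Laplacian term. For such $\phi$ one has $D^2\phi\ge \frac{c}{|x|}(\text{tangential projection})$ and $-\Delta^2\phi\ge 0$ in the distributional sense (in $n=3$, $-\Delta|x|^{-1}$ type computations produce a positive measure at the origin giving the pointwise-in-$R$ boundary term $R^{-2}\int_{|x|=R}|u|^2$), so that the first line of \eqref{eq.virialW} produces $\int |\nabla_A^\tau u|^2/|x|$ plus the $u_t$ and boundary contributions. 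The key algebraic point is that with $\phi$ growing linearly one also recovers the full gradient $|\nabla_Au|^2$ — not just the tangential part — localized on $|x|\le R$ after taking the $\sup_R$, which is exactly the left side of \eqref{eq.smoowave}.

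The next step is to absorb the "bad" terms on the right of \eqref{eq.virialW}: the potential term $-\int\phi'V_r|u|^2$ and the magnetic term $2\,\Im\int u\phi'B_\tau\cdot\overline{\nabla_Au}$. Since $\phi'$ is bounded (it tends to $1$), $\phi'V_r\lesssim V_r^+$ up to the good part, and the smallness hypothesis $|||B|||_3+|||V_r^+|||_2\le\frac12$ is tailored (via the $|||\cdot|||_\alpha$ norms of Definition \ref{def.normetriple} and a trace/Sobolev inequality on spheres) precisely so that these two integrals are bounded by $\frac12$ times the good quadratic form — here one uses Cauchy–Schwarz on the magnetic term, $|u\,B_\tau\cdot\overline{\nabla_Au}|\le \frac{1}{2\delta}|B_\tau|^2|u|^2+\frac{\delta}{2}|\nabla_A^\tau u|^2$, and then the one-dimensional weighted estimate $\int_{\R^3}\frac{W(x)|u|^2}{|x|}\lesssim |||W|||_{?}\cdot\sup_R R^{-2}\int_{|x|=R}|u|^2$ controlling $|u|^2$-integrals against $|||B_\tau^2|||_3$ and $|||V_r^+|||_2$. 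The factor $\frac12$ in \eqref{eq.smallw} leaves room to keep a positive remainder, which yields the $\epsilon$-terms in \eqref{eq.smoowave+} when the inequality is strict.

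Finally one integrates the identity in time over $[0,T]$: $\int_0^T\ddot\Theta_W = \dot\Theta_W(T)-\dot\Theta_W(0)$, and the boundary terms $\dot\Theta_W$ are bounded by $E(0)$ uniformly in $T$ because $\phi'$ and $\Delta\phi$ (suitably normalized, e.g. $\phi\sim|x|$ gives $\Delta\phi\sim 2/|x|$ in $3$D, handled by Hardy's inequality) are controlled, and $\Theta_W$ itself is comparable to a weighted energy that is finite for data in $\dot{\mathcal H}^1$ with $H_Af,H_Ag\in L^2$; the energy conservation $E(t)=E(0)$ then closes the estimate. Letting $T\to\infty$ gives the half-line integral, and taking the supremum over the scaling family $\phi_R$ gives the $\sup_{R>0}\frac1R$ on the left. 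I expect the main obstacle to be the low-regularity / distributional handling of the singular terms — justifying the virial identity and its time-integration for data merely satisfying $f,g\in L^2$, $H_Af,H_Ag\in L^2$ (which requires an approximation/density argument, e.g. truncating $\phi$ at scale $\varepsilon$ near $0$ and at scale $N$ near $\infty$ and passing to the limit), and making rigorous the sign $-\Delta^2\phi\ge0$ together with the concentrated boundary term it generates. The purely algebraic absorption via the $|||\cdot|||_\alpha$ norms is routine once the trace inequalities on spheres are set up.
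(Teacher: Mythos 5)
Your overall strategy (apply the wave virial identity of Theorem \ref{thm.virialW} with a rescaled multiplier $\phi_R(x)=R\,\varphi(|x|/R)$, absorb the $V_r$ and $B_\tau$ terms by Cauchy--Schwarz and the $|||\cdot|||_\alpha$ norms, integrate in time and use Hardy's inequality plus energy conservation) matches the paper's, which simply reuses the Schr\"odinger multipliers from Section 3.1. But there is one genuine gap that would make your argument fail as written: the handling of the second multiplier $\Psi$. You propose ``$\Psi\equiv0$ or $\Psi$ chosen to cancel the Laplacian term,'' and later assert that ``the first line of \eqref{eq.virialW} produces $\ldots$ the $u_t$ $\ldots$ contributions.'' This is not so. Looking at \eqref{eq.virialW}, the quantity $|u_t|^2$ appears \emph{only} in the term $2\int|u_t|^2\Psi\,dx$ coming from the $\Psi$-part of $\Theta_W$; none of the $\phi$-terms on the right-hand side involve $u_t$. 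Thus with $\Psi\equiv0$ the identity gives you no control whatsoever over $\int_{|x|\le R}|u_t|^2$, and you cannot reach the left side of \eqref{eq.smoowave}, which explicitly contains $|u_t|^2$.

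The paper's choice is to take $\Psi$ \emph{nonzero and localized}, roughly $\Psi\sim\tfrac1{2R}\mathbf 1_{\{|x|\le R\}}$, so that the term $2\int|u_t|^2\Psi$ produces exactly the desired $\frac{1}{R}\int_{|x|\le R}|u_t|^2$. The price to pay is the negative term $-2\int|\nabla_A u|^2\Psi$, and the delicate point (which the paper makes explicit) is to check that $\int_{|x|\le R}\bigl(\nabla_A u\,D^2\phi\,\overline{\nabla_A u}-|\nabla_A u|^2\Psi\bigr)\ge0$ given the explicit $\phi'$, $\phi''$; the constant in $\Psi$ is calibrated precisely for this balance. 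The remaining $\int|u|^2\Delta\Psi$ term is checked to have a favorable sign. Your proposal does not account for this trade-off at all and therefore would not deliver the $|u_t|^2$ piece of the estimate. A secondary minor imprecision: the $\sup_R R^{-2}\int_{|x|=R}|u|^2$ boundary term comes from the Dirac mass $\delta_{|x|=R}$ that appears in $\Delta^2\phi_R$ on the sphere of radius $R$ (together with a $\delta_{x=0}$ at the origin), not from ``a positive measure at the origin.''
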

The analogous in higher dimension is the following.
\begin{theorem}[Weak dispersion for higher dimensional wave]\label{thm.smoowave4}
  Let $n\geq4$, and assume that
  \begin{equation}\label{eq.ipbfinw}
    |B_\tau(x)|\leq\frac{C_1}{|x|^2},
    \qquad
    |V_r^+(x)|\leq\frac{C_2}{|x|^3},
    \qquad
    C_1^2+2C_2\leq\frac23(n-1)(n-3),
  \end{equation}
  for all $x\in\R^n$. Then, for any solution on \eqref{eq.schro} with $f,g\in L^2,\ H_Af,H_Ag\in L^2$,
  the following estimate holds:
  \begin{equation}\label{eq.smoow4}
    \sup_{R>0}\frac1R\int_0^{+\infty}\int_{|x|\leq R}
    \left(|u_t|^2+\left|\nabla_Au\right|^2\right)\,dx\,dt
    \leq CE(0),
  \end{equation}
  for some $C>0$. Moreover, if the strict inequality holds in
  \eqref{eq.ipbfinw}, we also have
  \begin{align}
    & \sup_{R>0}\frac1R\int_0^{+\infty}\int_{|x|\leq R}
    \left(|u_t|^2+\left|\nabla_Au\right|^2\right)\,dx\,dt
    +\epsilon\int_0^{+\infty}\int_{\R^n}
    \frac{\left|\nabla_A^\tau u\right|^2}{|x|}\,dx\,dt
    \label{eq.smoow42}
    \\
    & +\epsilon\frac{(n-1)(n-3)}{2}\int_0^{+\infty}\int_{|x|\geq
    R}\frac{|u|^2}{|x|^3}\,dx\,dt\leq CE(0),
    \nonumber
  \end{align}
  for some $\epsilon>0$.
\end{theorem}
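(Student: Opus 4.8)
The plan is to run the same scheme used for Theorem~\ref{thm.smooschro4}, with the wave virial identity \eqref{eq.virialW} in place of the Schr\"odinger one and the conserved energy $E(0)$ in place of $\|f\|_{\dot{\mathcal H}^{\frac12}}^2$. Fix $R>0$ and apply \eqref{eq.virialW} to the solution $u(t)$ with the radial multipliers
\begin{equation*}
  \phi_R(r)=\frac{r^2}{2R}\quad(r\le R),\qquad \phi_R(r)=r-\frac R2\quad(r\ge R),
\end{equation*}
and $\Psi_R=\frac1{2R}$ on $\{|x|<R\}$, $\Psi_R=0$ on $\{|x|\ge R\}$, then integrate the identity over $t\in[0,T]$ and let $T\to\infty$. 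Since $\phi_R$ is $C^1$ with $0\le\phi_R'\le1$, $\Delta\phi_R=O(|x|^{-1})$ and $\nabla\Delta\phi_R=O(|x|^{-2})$ uniformly in $R$, the diamagnetic Hardy inequality gives $|\dot\Theta_W(t)|\le CE(0)$ uniformly in $t$ and $R$, hence $\int_0^{+\infty}\ddot\Theta_W(t)\,dt\le CE(0)$; so it suffices to bound $\ddot\Theta_W(t)$ from below by a fixed multiple of the spatial integrand on the left of \eqref{eq.smoow4}.

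With this choice $D^2\phi_R=\tfrac1R\,\mathrm{Id}$ on $\{|x|<R\}$ and $D^2\phi_R=\tfrac1{|x|}$ times the projection onto the tangent space of the sphere on $\{|x|>R\}$; together with the terms $2\int\Psi_R|u_t|^2-2\int\Psi_R|\nabla_Au|^2$ this yields $\tfrac1R\int_{|x|<R}(|u_t|^2+|\nabla_Au|^2)$ plus the Morawetz term $2\int_{|x|>R}|\nabla_A^\tau u|^2/|x|$. Next $\Delta^2\phi_R$ vanishes on $\{|x|<R\}$, equals $-(n-1)(n-3)|x|^{-3}$ on $\{|x|>R\}$, and carries a distributional layer on $\{|x|=R\}$; the indefinite single‑layer part of $-\tfrac12\int|u|^2\Delta^2\phi_R$ is exactly cancelled by $\int|u|^2\Delta\Psi_R$ (this is precisely why $\Psi_R$ is taken to jump by $-\tfrac1{2R}$ across the sphere), leaving the nonnegative pieces $\tfrac{(n-1)(n-3)}2\int_{|x|>R}|u|^2/|x|^3$ and $\tfrac{n-1}{2R^2}\int_{|x|=R}|u|^2\,d\sigma$. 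One is thus left with
\begin{align*}
  \ddot\Theta_W(t)=\ & \frac1R\int_{|x|<R}\!\!\big(|u_t|^2+|\nabla_Au|^2\big)
  +2\int_{|x|>R}\frac{|\nabla_A^\tau u|^2}{|x|}
  +\frac{(n-1)(n-3)}2\int_{|x|>R}\frac{|u|^2}{|x|^3}
  +\frac{n-1}{2R^2}\int_{|x|=R}|u|^2\,d\sigma\\
  & -\int\phi_R'\,V_r|u|^2+2\,\Im\int u\,\phi_R'\,B_\tau\cdot\overline{\nabla_Au},
\end{align*}
and what remains is to absorb the last two perturbative terms into the four nonnegative ones.

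For the absorption I would use $-\int\phi_R'V_r|u|^2\ge-\int\phi_R'V_r^+|u|^2$ together with the decay $V_r^+\le C_2|x|^{-3}$, $|B_\tau|\le C_1|x|^{-2}$ from \eqref{eq.ipbfinw}, the orthogonality $B_\tau\cdot x=0$ (so only $\nabla_A^\tau u$ enters the magnetic term), and Cauchy--Schwarz: the part of the perturbation in $\{|x|>R\}$ is then $\le a\int_{|x|>R}|\nabla_A^\tau u|^2/|x|+(C_2+C_1^2/a)\int_{|x|>R}|u|^2/|x|^3$ for any $a>0$, dominated by the Morawetz and Hardy terms; the part in $\{|x|<R\}$ (where $\phi_R'=|x|/R$) is controlled, via the localized Hardy inequality $\int_{|x|<R}|u|^2/|x|^2\le c_n\big(\int_{|x|<R}|\nabla_Au|^2+R^{-1}\int_{|x|=R}|u|^2\big)$ applied through the diamagnetic inequality, by a small multiple of $\tfrac1R\int_{|x|<R}|\nabla_Au|^2+\tfrac1{R^2}\int_{|x|=R}|u|^2$. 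Optimizing $a$ and these splittings, the precise condition under which the whole perturbation is absorbed — with a fixed fraction $\epsilon$ of the good terms left over when the inequality is strict — is $C_1^2+2C_2\le\tfrac23(n-1)(n-3)$. Integrating in time then gives \eqref{eq.smoow4}; in the strict case, discarding the nonnegative boundary term $R^{-2}\int_{|x|=R}|u|^2$ (legitimate in $n\ge4$, where the $|u|^2/|x|^3$ term is already available — in $n=3$ it vanishes and the boundary term must be kept, which is Theorem~\ref{thm.smoowave}), taking $\sup_{R>0}$ in the localized term, and letting $R\downarrow0$ to replace $\int_{|x|>R}$ by $\int_{\R^n}$ in the Morawetz term, yields \eqref{eq.smoow42}.

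The hard part is the rigorous bookkeeping of the distributional layers on $\{|x|=R\}$: the corner of $\phi_R$ produces single‑ and double‑layer contributions in $\Delta^2\phi_R$ of indefinite (or a priori wrong) sign, and the companion multiplier $\Psi_R$ must be engineered exactly so that these either cancel one another or appear with the sign of the terms on the left of \eqref{eq.smoow4}; getting this matching right (and the sharp constant $\tfrac23(n-1)(n-3)$) is the delicate point. The remaining issues are standard: justifying \eqref{eq.virialW} for the merely $C^{1,1}$ multiplier $\phi_R$ and the low‑regularity data $f,g\in L^2$, $H_Af,H_Ag\in L^2$ (by regularizing $\phi_R$ and approximating the data in a dense subspace where $\Theta_W,\dot\Theta_W,\ddot\Theta_W$ are finite and the identity is classical, then using the uniform bound $|\dot\Theta_W(t)|\le CE(0)$ — together with finite propagation speed for compactly supported data — to pass to the limit), and establishing the localized Hardy inequality in the magnetic form quoted above.
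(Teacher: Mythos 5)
Your bookkeeping of the distributional layers is essentially correct: with the $C^{1,1}$ multiplier $\phi_R$ (quadratic inside $B_R$, linear outside), $\Delta\phi_R$ jumps by $-1/R$ across $\{|x|=R\}$, $\Delta^2\phi_R$ acquires a double layer $\tfrac1R\delta'_{|x|=R}$ plus a nonnegative single layer, and the choice $\Psi_R=\tfrac1{2R}\chi_{B_R}$ makes $\int|u|^2\Delta\Psi_R$ cancel the double-layer term in $-\tfrac12\int|u|^2\Delta^2\phi_R$ exactly. This part works, and it is a cleaner mechanism than the paper's (the paper takes $\phi$ to be $C^{1,1}$ with $\Delta\phi$ continuous, so only a nonnegative single layer appears; your $\phi_R$ is rougher and you use $\Psi_R$ as the corrector).

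However, there is a genuine gap in the absorption of the perturbative terms, and it is caused precisely by your choice of multiplier. With your $\phi_R$ the bi-Laplacian vanishes on $\{|x|<R\}$, so the Hardy term $\tfrac{(n-1)(n-3)}{2}\int\frac{|u|^2}{|x|^3}$ is only available on $\{|x|>R\}$; inside the ball you must absorb $-\int_{|x|<R}\phi_R'V_r^+|u|^2+2\Im\int_{|x|<R}u\phi_R'B_\tau\cdot\overline{\nabla_Au}$ (of size $\tfrac{C_2}{R}\int_{|x|<R}\frac{|u|^2}{|x|^2}+\tfrac{2C_1}{R}\int_{|x|<R}\frac{|u||\nabla_A^\tau u|}{|x|}$, since $\phi_R'=|x|/R$) into $\tfrac1R\int_{|x|<R}|\nabla_Au|^2$ and the boundary term $\tfrac{n-1}{2R^2}\int_{|x|=R}|u|^2$, via the localized magnetic Hardy inequality. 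The sharp localized Hardy inequality is $\tfrac{(n-2)^2}{4}\int_{|x|<R}\frac{|u|^2}{|x|^2}\le\int_{|x|<R}|\nabla_Au|^2+\tfrac{n-2}{2R}\int_{|x|=R}|u|^2$; after Young's inequality with parameter $b>0$, requiring a positive fraction of $\tfrac1R\int_{|x|<R}|\nabla_Au|^2$ to survive forces $\tfrac{4}{(n-2)^2}(C_2+C_1^2/b)+b<1$, and minimizing over $b$ gives the necessary condition $\tfrac{4C_2}{(n-2)^2}+\tfrac{4C_1}{n-2}<1$. This is \emph{not} implied by $C_1^2+2C_2\le\tfrac23(n-1)(n-3)$: for $n=4$, $C_2=0$, $C_1=\sqrt2$ (which saturates the hypothesis), the left-hand side is $2\sqrt2>1$, and the situation only worsens for larger $n$. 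So the scheme as stated does not close, and the asserted ``optimizing $a$ and these splittings gives exactly $C_1^2+2C_2\le\tfrac23(n-1)(n-3)$'' is incorrect.

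The paper avoids this by a different choice of $\phi$: it takes $\widetilde\phi_R(r)=r+R\varphi(r/R)$, i.e. it adds the linear multiplier $|x|$ (with $\widetilde\phi_R'(0)=1\ne0$), whose bi-Laplacian $-(n-1)(n-3)/r^3$ supplies the Hardy weight $\int\frac{|u|^2}{|x|^3}$ on \emph{all} of $\R^n$, so the absorption of the potential terms is carried out uniformly by the Morawetz and Hardy terms (this is Step 1 of the proof of Theorem~\ref{thm.smooschro4}); the perturbative piece $\varphi$ then produces the local smoothing term, and the only price paid is the factor $\sup\widetilde\phi_R'=\tfrac32$, which scales $C_1,C_2$ and is where the constant $\tfrac23(n-1)(n-3)$ comes from. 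If you want to salvage your approach, you should replace your $\phi_R$ by one with a linear component near the origin so that $-\Delta^2\phi_R$ contributes a nonnegative multiple of $|x|^{-3}$ on $\{|x|<R\}$ too, or, equivalently, run the paper's two-step scheme with $\phi=|x|$ first and then perturb, rather than relying on the localized Hardy inequality, whose constant $(n-2)^2/4$ is too small relative to $(n-1)(n-3)$.
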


\subsection{Strichartz estimates for the magnetic wave
equation}\label{subsec.stri} It is more or less standard to prove
Strichartz estimates as applications of Theorems \ref{thm.smoowave},
\ref{thm.smoowave4}; we do it in Theorem \ref{thm.striwave}. The key points are the estimates obtained in
the previous section and to write \eqref{eq.wave} as
\begin{equation}
  \begin{cases}\label{eq.waveF}
    u_{tt}-\Delta u=F(t,x)
    \\
    u(0)=f
    \\
    u_t(0)=g,
  \end{cases}
\end{equation}º
whereº
\begin{equation}\label{eq.F}
  F=-2iA\cdot\nabla_Au-A^2u-Vu.
\end{equation}
We
recall that a couple $(p,q)$ is said to be {\it wave admissible} if
\begin{equation}\label{eq.wavead}
  \frac2p+\frac{n-1}{q}=\frac{n-1}{2},
  \qquad
  2\leq p\leq\infty,
  \qquad
  \frac{2(n-1)}{n-3}\geq q\geq2,
  \quad
  q\neq\infty.
\end{equation}
If $(p,q)$ is a wave admissible couple, we say that it is an {\it
endpoint couple} if $p=2$. We can state the following theorem.
\begin{theorem}[Strichartz for wave]\label{thm.striwave}
  Let $n\geq3$; assume (H1), (H2) and either \eqref{eq.smallw} or \eqref{eq.ipbfinw}. Moreover  assume that
  \begin{equation}\label{eq.ipBstri}
    |B(x)|\leq\frac{C}{(1+|x|)^{2+\delta}},
    \qquad
    |V(x)|\leq\frac{C}{(1+|x|)^{2+\delta}},
  \end{equation}
  for some $C>0$ and some  $\delta>0$. If $u$ is a solution of
  \eqref{eq.wave} and $(p,q)$ is any non endpoint wave admissible
  couple, then the following Strichartz estimate holds:
  \begin{equation}\label{eq.striwave}
    \|u\|_{L^p_t\dot H^\sigma_q}\lesssim \|f\|_{\dot{\mathcal H^1}}+\|g\|_{L^2},
  \end{equation}
  where the gap of derivatives is $\sigma=\frac1q-\frac1p+\frac12$.
\end{theorem}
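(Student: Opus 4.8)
The plan is to use the standard Duhamel/perturbative strategy: treat the magnetic wave equation \eqref{eq.wave} as the free wave equation with the forcing term $F$ given in \eqref{eq.F}, apply the free (retarded) Strichartz estimates to the Duhamel term, and absorb the resulting terms using the weak dispersive bounds \eqref{eq.smoowave+}--\eqref{eq.smoow42} together with dual estimates of the same local-smoothing type. More precisely, writing $u = u_{\mathrm{free}} + \int_0^t \frac{\sin((t-s)\sqrt{-\Delta})}{\sqrt{-\Delta}} F(s)\,ds$, one has for any non-endpoint wave-admissible $(p,q)$ the inhomogeneous estimate
\begin{equation*}
  \|u\|_{L^p_t \dot H^\sigma_q}
  \lesssim \|f\|_{\dot H^1} + \|g\|_{L^2}
  + \big\| \langle D\rangle^{-1+\sigma}\langle D\rangle^{\sigma'} F \big\|_{L^{\tilde p'}_t L^{\tilde q'}_x}
\end{equation*}
for a suitable dual admissible pair; the point is then to estimate $F = -2iA\cdot\nabla_A u - A^2 u - V u$ in a space that the weak dispersion controls. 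The natural choice is to put $F$ (after removing one derivative) in a weighted $L^2_{t,x}$ space, i.e.\ to bound $\||x| F\|_{L^2_t L^2_x}$-type quantities, since the Kato-smoothing side of \eqref{eq.smoowave+} and its analogue for $\nabla_A^\tau u / |x|^{1/2}$ give exactly control of $u$ in weighted $L^2$.

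The key steps, in order, would be: (1) Record the free inhomogeneous Strichartz estimates for the wave equation and a "smoothing $\Rightarrow$ Strichartz" lemma of the type: if $\| |x|^{-1/2} \langle D\rangle^{-1/2} G \|_{L^2_{t,x}}$ is finite then $\int_0^t U(t-s) G(s)\,ds$ lies in $L^p_t \dot H^\sigma_q$ for non-endpoint $(p,q)$ — this is the dual/retarded form of the free smoothing estimate and is by now classical (it follows from the free resolvent bounds, cf.\ the references \cite{brv}, \cite{brvv}). (2) Use the decay hypotheses \eqref{eq.ipBstri}: $|A(x)| \lesssim (1+|x|)^{-1-\delta}$ follows from $|B(x)|\lesssim (1+|x|)^{-2-\delta}$ via the Biot--Savart representation \eqref{eq.BS} (and its $n\ge4$ analogue), while $|V(x)|\lesssim (1+|x|)^{-2-\delta}$ is assumed directly; these decays are exactly what is needed so that multiplication by $A$, $A^2$, $V$ maps the weighted $L^2$ space coming from smoothing into the dual weighted $L^2$ space with a gain. (3) Combine the weak dispersion Theorems \ref{thm.smoowave}, \ref{thm.smoowave4}: under \eqref{eq.smallw} or \eqref{eq.ipbfinw} these give $\sup_R \frac1R \int_0^\infty\!\!\int_{|x|\le R}(|u_t|^2+|\nabla_A u|^2)\,dx\,dt \lesssim E(0)$, hence in particular $\| |x|^{-1/2}\nabla_A u\|_{L^2_{t,x}}$-type bounds on dyadic annuli, and by the extra positive terms in \eqref{eq.smoowave+}--\eqref{eq.smoow42} also the $L^2$ bound on $u$ itself weighted by $|x|^{-3/2}$ or by the spherical $L^2$ norm over $|x|=R$. (4) Estimate each piece of $F$: $\|\,|x|\,A\cdot\nabla_A u\|_{L^2_{t,x}} \lesssim \|\,|x|^{1+\epsilon}A\|_{L^\infty}\,\| |x|^{-\epsilon}\nabla_A u\|_{L^2_{t,x}} \lesssim E(0)^{1/2}$ using the decay $|A|\lesssim|x|^{-1-\delta}$ and the smoothing bound; similarly for $A^2 u$ and $V u$, where now one uses the weighted $L^2$ control of $u$. (5) Insert these into the smoothing-to-Strichartz lemma to conclude \eqref{eq.striwave}, noting that near $x=0$ the hypotheses \eqref{eq.ipBstri} give bounded $A,V$ so no singularity issue arises, and that $\sigma = \frac1q-\frac1p+\frac12$ is exactly the scaling gap forced by the one derivative lost in $\langle D\rangle^{-1}$ applied to a first-order forcing term.

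The main obstacle I expect is step (1)--(4) interface: matching the precise function spaces so that the smoothing estimates proved in Section~\ref{subsec.appl} (which are stated as $\sup_R \frac1R\int_{|x|\le R}$ local-energy bounds, not as clean weighted-$L^2$ norms) feed correctly into the dual Strichartz machinery, and handling the fact that $\nabla_A u$ rather than $\nabla u$ appears — one must commute $A$ past $\langle D\rangle$, which costs derivatives, so some care with $\|A\|_{W^{1,\infty}}$ or a Littlewood--Paley decomposition is needed; the almost-Coulomb decay $2+\delta$ (rather than a faster decay) is what makes the weighted estimates just barely close, so the bookkeeping of the $\delta$ is delicate. A secondary technical point is that the endpoint $p=2$ is genuinely excluded, since the free endpoint Strichartz estimate in the retarded form is incompatible with the $|x|^{-1/2}$-weighted $L^2$ smoothing norm in low dimensions; staying away from the endpoint sidesteps this.
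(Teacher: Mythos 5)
Your proposal follows essentially the same route as the paper: Duhamel against the free wave group, a "smoothing-to-Strichartz" lemma built from the free Strichartz estimate, the dual of the free local-smoothing estimate and the Christ--Kiselev lemma, the Biot--Savart bound $|A|\lesssim(1+|x|)^{-1-\epsilon}$ from $|B|\lesssim(1+|x|)^{-2-\delta}$, and then Theorems \ref{thm.smoowave}/\ref{thm.smoowave4} to control the forcing $F=-2iA\cdot\nabla_Au-A^2u-Vu$ in the weighted $L^2$ space dual to local smoothing. One clarification: the paper's Lemma \ref{lem.leving} bounds the Duhamel term in $L^p_t\dot H^\sigma_q$ directly by the dyadic norm $\sum_j2^{j/2}\|F_j\|_{L^2L^2}$ with no derivatives placed on $F$ (the $\sigma$ derivatives already come out of the free $TT^*$ argument), so the issue you flag about commuting $A$ past $\langle D\rangle$ does not arise; one simply keeps $\nabla_Au$ as a block and feeds $\sup_j 2^{-j}\int\!\int_{|x|\sim 2^j}|\nabla_Au|^2$ and the extra $|u|^2$-terms from \eqref{eq.smoowave+}, \eqref{eq.smoow42} into the dyadic sum.
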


\begin{remark}\label{rem.schlag}
  It is interesting to compare this result with the ones in
  \cite{pda-lf2}, \cite{ges}, \cite{GST} and \cite{ste}. Assumption \eqref{eq.ipBstri}
  is made in terms of $B$; since it is the anti-symmetric gradient of $A$ and $\text{div}\,A=0$, \eqref{eq.ipBstri}
  implies that
  \begin{equation}\label{eq.ipAstri}
    A\leq\frac{C}{(1+|x|)^{1+\epsilon}}.
  \end{equation}
  It is relevant to notice that
  \eqref{eq.ipBstri} is gauge invariant, while it is
  not the same for \eqref{eq.ipAstri}. Also the non-trapping and repulsivity conditions given by \eqref{eq.smallw} and \eqref{eq.ipbfinw} imply the non-existence either of 0-resonances or eiegenvalues.
\end{remark}

\begin{remark}\label{rem.biot}
  Also notice that \eqref{eq.ipBstri}
  requires $\alpha>2$ in the homogeneous example \ref{ex.biot}.
  For these examples $A^2$ is homogeneous of degree strictly bigger than two.
From the counterexamples  given in \cite{gvv} it is natural to
expect that Strichartz estimates will fail if $\alpha<2$. Notice
that in \ref{ex.biot}, $A^2(\omega)=0=\min_{|x|=1}A^2(x)$, and this
is a necessary condition for the results in  \cite{gvv} to
hold.\end{remark}

\begin{remark}\label{rem.extend}
  It would be interesting to extend this result to the magnetic
  Schr\"odinger equation \eqref{eq.schro}. In order to do that, we
  should prove the versions of Theorems \ref{thm.smooschro},
  \ref{thm.smooschro4} with $L^2$-initial data. This will be done
  elsewhere.
\end{remark}

\section{Virial identities: proofs of Theorems \ref{thm.virialS} and \ref{thm.virialW}}
\label{sec.vir}

This section is devoted to the proofs of the
virial identities for the magnetic equations, Theorems
\ref{thm.virialS} and \ref{thm.virialW}. Let us start with the
magnetic Schr\"odinger equation.

\begin{proof}[{\bf Proof of Theorem \ref{thm.virialS}}]
Let us start by considering a solution $u\in\mathcal H^{\frac32}$ of
\eqref{eq.schro}. Using equation \eqref{eq.schro} in the form
\begin{equation}\label{eq.schro2}
  u_t=-iHu,
\end{equation}
we can easily compute
\begin{align}
  \dot\Theta_S(t) & =-i\left\langle u,[H,\phi]u\right\rangle
  \label{eq.teta1S}
  \\
  \ddot\Theta_S(t) & =-\left\langle u,[H,[H,\phi]]u\right\rangle,
  \label{eq.teta2S}
\end{align}
where the brackets $[,]$ are the commutator and the brackets
$\langle,\rangle$ are the hermitian product in $L^2$. In order to
simplify the notations, let us denote by
\begin{equation}\label{eq.T}
  T=-[H,\phi].
\end{equation}
By the Leibnitz formula
\begin{equation}\label{eq.leib}
  \nabla_A(fg)=g\nabla_Af+f\nabla g,
\end{equation}
which implies that
\begin{equation}\label{eq.leibH}
  H(fg)=(Hf)g+2\nabla_Af\cdot\nabla g+f(\Delta g),
\end{equation}
we can write explicitly
\begin{equation}\label{eq.T2}
  T=2\nabla\phi\cdot\nabla_A+\Delta\phi.
\end{equation}
Observe that $T$ is anti-symmetric, namely
\begin{equation*}
  \langle f,Tg\rangle=-\langle Tf,g\rangle;
\end{equation*}
moreover, in the case $A\equiv0$ the operator $T$ coincides with the
usual one introduce by Morawetz in \cite{mor}, which is
$2\nabla\phi\cdot\nabla+\Delta\phi$.

Hence we can rewrite \eqref{eq.teta2S} in the following form
\begin{equation}\label{eq.2teta2S}
  \ddot\Theta_S(t)=\left\langle u,[H,T]u\right\rangle,
\end{equation}
where $T$ is given by \eqref{eq.T2}.

We can compute explicitly the commutator $[H,T]$; by \eqref{eq.T2}
we have
\begin{equation}\label{eq.HT}
  [H,T]=-[\nabla_A^2,2\nabla\phi\cdot\nabla_A]-[\nabla_A^2,\Delta\phi]+[V,T]=:I+II+III.
\end{equation}
Let us introduce the following notations: for $f:\R^n\to\C$,
\begin{align*}
  f_j & =\frac{\partial f}{\partial x^j};
  \\
  f_{\widetilde j} & =f_j-iA^jf;
  \\
  f_{\widetilde j^\star} & =f_j+iA^jf.
\end{align*}
With these notations we have
\begin{equation*}
  (fg)_{\widetilde j}=f_{\widetilde j}g+fg_j;
\end{equation*}
moreover, the formula of integrations by parts is
\begin{equation*}
  \int_{\R^n}f_{\widetilde j}(x)g(x)\,dx=-\int_{\R^n}f(x)g_{\widetilde
  j^\star}(x)\,dx.
\end{equation*}
Now we compute the terms $I$, $II$ and $III$ in
\eqref{eq.HT}.

The term $III$ in \eqref{eq.HT} is easily computed:
\begin{equation}\label{eq.nuovaV}
  III=[V,T]=2[V,\nabla_A\cdot\nabla]=-2\nabla\phi\cdot\nabla V=-2\phi'V_r.
\end{equation}
For $I$, we have
\begin{align}
  -I = & 2\sum_{j,k=1}^n\left(\partial_{\widetilde j}\partial_{\widetilde j}\phi_k\partial_{\widetilde k}
  -\phi_k\partial_{\widetilde k}\partial_{\widetilde j}\partial_{\widetilde j}\right)
  \nonumber
  \\
  = & \sum_{j,k=1}^n\left(2\phi_{kjj}\partial_{\widetilde k}+4\phi_{jk}
  \partial_{\widetilde j}\partial_{\widetilde k}+2\phi_k(\partial_{\widetilde j}\partial_{\widetilde j}
  \partial_{\widetilde k}-\partial_{\widetilde k}\partial_{\widetilde j}\partial_{\widetilde
  j})\right).
  \label{eq.Iuno}
\end{align}
Notice that
\begin{align*}
  \partial_{\widetilde j}\partial_{\widetilde k}-\partial_{\widetilde k}\partial_{\widetilde j}= &
  i\left(A^j_k-A^k_j\right),
  \\
  \partial_{\widetilde j}\partial_{\widetilde j}\partial_{\widetilde k}
  -\partial_{\widetilde k}\partial_{\widetilde j}\partial_{\widetilde
  j} = &
  i\left(A^k_j-A^j_k\right)_j+2i\left(A^k_j-A^j_k\right)\partial_{\widetilde
  j};
\end{align*}
hence, by \eqref{eq.Iuno} we obtain
\begin{equation}\label{eq.Idue}
  -I=\sum_{j,k=1}^n\left(2\phi_{kjj}\partial_{\widetilde k}+4\phi_{jk}
  \partial_{\widetilde j}\partial_{\widetilde
  k}+2i\phi_j\left(A^j_k-A^k_j\right)_k+4i\phi_j\left(A^j_k-A^k_j\right)\partial_{\widetilde
  k}\right)
\end{equation}

For the term $II$ in \eqref{eq.HT} we compute
\begin{align}
  -II & =
  \sum_{j,k=1}^n\left(\partial_{\widetilde k}\partial_{\widetilde k}\phi_{jj}-
  \phi_{jj}\partial_{\widetilde k}\partial_{\widetilde k}\right)
  \nonumber
  \\
  & =\sum_{j,k=1}^n\left(\phi_{jjkk}+2\phi_{jjk}\partial_{\widetilde k}\right).
  \label{eq.II}
\end{align}
By \eqref{eq.Idue} and \eqref{eq.II} we can write
\begin{align}
  \langle u,[\nabla_A^2,T]u\rangle = &
  \sum_{j,k=1}^n\int_{\R^n}\left(2u\phi_{kjj}\overline{u_{\widetilde k}}
  +4u\phi_{jk}\overline{\partial_{\widetilde j}\partial_{\widetilde k}u}
  +2u\phi_{kjj}\overline{u_{\widetilde k}}\right)\,dx
  \nonumber
  \\
  & +\sum_{j,k=1}^n\int_{\R^n}\left(2i\phi_j\left(A^j_k-A^k_j\right)_k|u|^2
  +4iu\phi_j\left(A^j_k-A^k_j\right)\overline{u_{\widetilde
  k}}\right)\,dx
  \label{eq.III}
  \\
  & +\int_{\R^n}|u|^2\Delta^2\phi\,dx.
  \nonumber
\end{align}
Observe that
\begin{equation*}
  \overline{\partial_{\widetilde j}\partial_{\widetilde k}u}=
  \partial_{\widetilde j^\star}\partial_{\widetilde
  k^\star}\overline u;
\end{equation*}
as a consequence, integrating by parts the first three terms of
\eqref{eq.III} we have
\begin{align}
  \sum_{j,k=1}^n\int_{\R^n} & \left(2u\phi_{kjj}\overline{u_{\widetilde k}}
  +4u\phi_{jk}\overline{\partial_{\widetilde j}\partial_{\widetilde k}u}
  +2u\phi_{kjj}\overline{u_{\widetilde k}}\right)\,dx
  \label{eq.IV}
  \\
  & =\sum_{j,k=1}^n\int_{\R^n}-4u_{\widetilde j}\phi_{jk}\overline{u_{\widetilde
  k}}\,dx=-4\int_{\R^n}\nabla_A uD^2\phi\overline{\nabla_A u}\,dx.
  \nonumber
\end{align}

For the 4th and 5th term in \eqref{eq.III} we notice that
\begin{equation*}
  \sum_{j,k=1}^n\phi_{jk}\left(A^j_k-A^k_j\right)=0,
\end{equation*}
and integrating by parts we obtain
\begin{align}
  \sum_{j,k=1}^n\int_{\R^n} & \left(2i\phi_j\left(A^j_k-A^k_j\right)_k|u|^2
  +4iu\phi_j\left(A^j_k-A^k_j\right)\overline{u_{\widetilde
  k}}\right)\,dx &
  \label{eq.V}
  \\
  & =4\Im\sum_{j,k=1}^n\int_{\R^n}u\phi_j\left(A^j_k-A^k_j\right)\overline{u_{\widetilde
  k}}\,dx
  \nonumber
  \\
  & =4\Im\int_{\R^n}u\phi'B_\tau\cdot\overline{\nabla_{A}u}\,dx,
\end{align}
whit $B_tau$ as in Definition \ref{def.B}.

By \eqref{eq.nuovaV}, \eqref{eq.III}, \eqref{eq.IV} and \eqref{eq.V}
we conclude that
\begin{align}
  \langle u,[H,T]u\rangle = &
  4\int_{\R^n}\nabla_A uD^2\phi\overline{\nabla_A u}
  -\int_{\R^n}|u|^2\Delta^2\phi
  \label{eq.HT2}
  \\
  & -2\int_{\R^n}\phi'V_r|u|^2
  +4\Im\int_{\R^n}u\phi'B_\tau\cdot\overline{\nabla_{A}u}.
  \nonumber
\end{align}
Identities \eqref{eq.2teta2S} and \eqref{eq.HT2} prove
\eqref{eq.virialS}.

\begin{remark}\label{rem.reg}
  In the above arguments the highest order term in $u$ that appears is of
  the form
  \begin{equation*}
    \int\nabla_A^2u\nabla\phi\cdot\overline{\nabla_Au};
  \end{equation*}
  it makes sense thanks to assumption (H2) and the condition $f\in
  L^2,\ H_Af\in L^2$, which implies $H_Ae^{itH}f\in L^2$, and by interpolation $\nabla_Ae^{itH}f\in
  L^2$. Consequently, all the performed integration by parts  are
  permitted.
\end{remark}

\end{proof}

\begin{proof}[{\bf Proof of Theorem \ref{thm.virialW}}]

The proof of Theorem \ref{thm.virialW} is analogous to the previous
one. Let us write
\begin{equation}\label{eq.tetaW1}
  \Theta_W(t)=\langle u_t,\phi u_t\rangle+\langle\phi\nabla_A u,\nabla_A u\rangle
  -\frac12\langle u\Delta\phi,u\rangle+\langle u,\phi Vu\rangle+\langle u,\Psi u\rangle.
\end{equation}
Differentiating in \eqref{eq.tetaW1} with respect to time and using
equation \eqref{eq.wave} we obtain
\begin{align*}
  \frac{d}{dt}\langle u_t,\phi u_t\rangle & =-2\Re\langle u_t,\phi
  Hu\rangle=2\Re\langle u_t,\phi
  \nabla_A^2u\rangle-2\Re\langle u_t,\phi
  Vu\rangle,
  \\
  \frac{d}{dt}\langle\phi\nabla_A u,\nabla_A u\rangle & =-2\Re\langle u_t,\phi
  \nabla_A^2u\rangle-2\Re\langle u_t,\nabla\phi\cdot\nabla_A u\rangle,
  \\
  -\frac{d}{dt}\frac12\langle u\Delta\phi,u\rangle & =-\Re\langle
  u_t,(\Delta\phi)u\rangle,
  \\
  \frac{d}{dt}\langle u,\phi Vu\rangle & =2\Re\langle u_t,\phi
  Vu\rangle
  \\
  \frac{d}{dt}\langle u,\Psi u\rangle & =2\Re\langle u_t,\Psi u\rangle.
\end{align*}
Hence, recalling the operator
$T=-[H,\phi]=2\nabla\phi\cdot\nabla_A+\Delta\phi$, we have by
\eqref{eq.tetaW1}
\begin{equation}\label{eq.1tetaW2}
  \dot\Theta_W(t)=-\Re\langle u_t,Tu\rangle+2\Re\langle u_t,\Psi u\rangle.
\end{equation}
Consider  the first term on the RHS of \eqref{eq.1tetaW2}, differentiating
and using the equation we see that
\begin{equation}\label{eq.2tetaW}
  -\frac{d}{dt}\langle u_t,Tu\rangle=\langle
  u,HTu\rangle-\langle u_t,Tu_t\rangle.
\end{equation}
Since $T$ is anti-symmetric, we have
\begin{equation}\label{eq.1}
  \Re\langle u_t,Tu_t\rangle=0;
\end{equation}
moreover
\begin{equation*}
  \langle u,HTu\rangle=\langle
  u,THu\rangle+\langle u,[H,T]u\rangle=-
  \langle HTu,u\rangle+\langle u,[H,T]u\rangle,
\end{equation*}
and then
\begin{equation}\label{eq.2}
  \Re\langle u,HTu\rangle=\frac12\langle u,[H,T]u\rangle.
\end{equation}
Recollecting \eqref{eq.2tetaW}, \eqref{eq.1} and \eqref{eq.2} we
arrive at
\begin{equation}\label{eq.3}
  \frac{d}{dt}\Re\langle u_t,Tu\rangle=\frac12\langle
  u,[H,T]u\rangle.
\end{equation}
For the second term on the RHS of \eqref{eq.1tetaW2}, we observe
that
\begin{equation}\label{eq.simm}
  \frac{d}{dt}2\Re\langle u_t,\Psi u\rangle=2\langle u_t,\Psi
  u_t\rangle+2\Re\langle Hu,\Psi u\rangle.
\end{equation}
By integration by parts we see that
\begin{equation}\label{eq.simm2}
  \Re\langle Hu,\Psi
  u\rangle =-\int|\nabla_Au|^2\Psi-\Re\int\nabla_Au\cdot\nabla\Psi
  \overline u.
\end{equation}
Moreover,
\begin{equation*}
  \int\nabla_Au\cdot\nabla\Psi
  \overline u=-\int|u|^2\Delta\Psi-\int\overline{\nabla_Au}\cdot\nabla\Psi
  u,
\end{equation*}
and consequently
\begin{equation}\label{eq.simm3}
  \Re\int\nabla_Au\cdot\nabla\Psi
  \overline u=-\frac12\int|u|^2\Delta\Psi.
\end{equation}
In conclusion, by \eqref{eq.simm}, \eqref{eq.simm2} and
\eqref{eq.simm3} we obtain
\begin{equation}\label{eq.simmfin}
  \frac{d}{dt}2\Re\langle u_t,\Psi u\rangle=2\int|u_t|^2\Psi-2\int|\nabla_Au|^2\Psi
  +\int|u|^2\Delta\Psi.
\end{equation}

Finally, by \eqref{eq.1tetaW2}, \eqref{eq.3} and \eqref{eq.simmfin}
we conclude that
\begin{equation}\label{eq.4}
  \ddot\Theta_W(t)=\frac12\langle
  u,[H,T]u\rangle+2\int|u_t|^2\Psi-2\int|\nabla_Au|^2\Psi
  +\int|u|^2\Delta\Psi.
\end{equation}
The term $[H,T]$ on the RHS of \eqref{eq.4} has been already
computed in the previous section, modulo the constant $1/2$. The
analogous to Remark \ref{rem.reg} concludes the proof of
\eqref{eq.virialW}.

\end{proof}

\section{Proofs of the smoothing estimates}\label{sec.dim}

We devote this section to the proofs of Theorems
\ref{thm.smooschro}, \ref{thm.smooschro4}, \ref{thm.smoowave},
\ref{thm.smoowave4}. The proofs are based on suitable choices of the
multiplier $\phi$ in the virial identities \eqref{eq.virialS} and
\eqref{eq.virialW}. As we see in the following, the choice of the
multipliers is different in the cases $n=3$ and $n\geq4$, and it
follows the ideas of the paper \cite{brv}. We start with the
Schr\"odinger equation in space dimension $n=3$.

\subsection{Proof of Theorem \ref{thm.smooschro}}\label{sec.proof1}

Recalling \eqref{eq.teta2S} and \eqref{eq.T},
  let us rewrite identity \eqref{eq.virialS} as follows
  \begin{equation*}
    \frac{d}{dt}\dot\Theta_S(t) = -\langle u,[H,T]u\rangle.
  \end{equation*}
  By \eqref{eq.teta1S} and \eqref{eq.T2}, integrating by parts we
  see that
  \begin{equation*}
    \dot\Theta_S(t)=2\Im\int_{\R^n}\overline
    u(x,t)\nabla_Au(x,t)\cdot\nabla\phi(x)\,dx.
  \end{equation*}
  Hence we can rewrite
  \eqref{eq.virialS} as follows
  \begin{align}
    &  2\int_{\R^n}
    \nabla_AuD^2\phi\overline{\nabla_Au}-\frac12\int_{\R^n}|u|^2\Delta^2\phi
    -\int_{\R^n}\phi'V_r|u|^2
    \label{eq.virialS2}
    \\
    & +2\Im\int_{\R^n}u\phi'B_\tau\cdot\overline{\nabla_Au}=\mathcal R(t),
    \nonumber
  \end{align}
  for all $t>0$, where
  \begin{equation*}
  \mathcal R(t)=\frac d{dt}\left(\Im\int_{\R^n}\overline u(x,t)\nabla_Au(x,t)\cdot\nabla\phi(x)\,dx
    \right),
\end{equation*}

   We start with an interpolation Lemma that will be
  used for the estimate of the right hand side of
  \eqref{eq.virialS2}.
\begin{lemma}\label{lem.interpol}
  Let $\phi=\phi(|x|):\R^n\to\R$, $n\geq3$, be a radial function such that
  $\phi'(r)$ and $r\phi''(r)$ are bounded; then the
  following estimate holds:
  \begin{equation}\label{eq.interpol}
    \left|\int_{\R^n}\overline u(x,t)\nabla_Au(x,t)\cdot\nabla\phi(x)\,dx\right|
    \leq C\|f\|_{\dot{\mathcal H}^{\frac12}}.
  \end{equation}
\end{lemma}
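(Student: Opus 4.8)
The plan is to first reduce \eqref{eq.interpol} (whose right-hand side should carry a square, as it is used in Section~\ref{sec.dim}) to a \emph{static} estimate, and then to treat the real and imaginary parts of the scalar product by two quite different arguments. Since $u(t)=e^{itH}f$ satisfies $\|u(t)\|_{\dot{\mathcal H}^{\frac12}}=\|f\|_{\dot{\mathcal H}^{\frac12}}$ (conservation law) and, by the regularity discussion above (Remark~\ref{rem.reg}), lies in the form domain of $H$ with $\nabla_Au(t)\in L^2$, it suffices to prove
\begin{equation*}
  \left|\int_{\R^n}\overline v\,\nabla_Av\cdot\nabla\phi\,dx\right|\le C\,\|v\|_{\dot{\mathcal H}^{\frac12}}^2
\end{equation*}
for every $v$ with $v,H_Av\in L^2$. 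Radiality of $\phi$ gives $\nabla\phi=\phi'(|x|)\,x/|x|$, so the integral equals $\langle v,\phi'\partial_r^Av\rangle$ with $\partial_r^A:=\frac{x}{|x|}\cdot\nabla_A$ and $\langle\cdot,\cdot\rangle$ the $L^2$ product; the only facts about $\phi$ I will need are $|\nabla\phi|\le C$ and $|\Delta\phi|=\bigl|\phi''+\tfrac{n-1}{r}\phi'\bigr|\le C/|x|$, both immediate from the boundedness of $\phi'$ and $r\phi''$.

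For the real part I would integrate by parts: the magnetic contribution in $\partial_r^A$ is purely imaginary, so $\Re\langle v,\phi'\partial_r^Av\rangle=\tfrac12\int\phi'\,\partial_r|v|^2\,dx=-\tfrac12\int|v|^2\Delta\phi\,dx$, and then, by $|\Delta\phi|\le C/|x|$, the fractional Hardy inequality and the diamagnetic pointwise bound $\bigl|\nabla|v|\bigr|\le|\nabla_Av|$ (together with (H1)--(H2), which identify $\|v\|_{\dot{\mathcal H}^{\frac12}}$ with $\|H_A^{1/4}v\|_{L^2}$ and with $\bigl\||v|\bigr\|_{\dot H^{\frac12}}$ up to constants),
\begin{equation*}
  \bigl|\Re\langle v,\phi'\partial_r^Av\rangle\bigr|\le\frac{C}{2}\int_{\R^n}\frac{|v|^2}{|x|}\,dx\le C'\,\|v\|_{\dot{\mathcal H}^{\frac12}}^2 .
\end{equation*}

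The imaginary part is the heart of the matter, and the naive bound fails: splitting by Cauchy--Schwarz as $|v|\cdot|\nabla_Av|$ with weights $|x|^{\pm1/2}$ is too lossy, because $\int|x|\,|\nabla_Av|^2$ is \emph{not} controlled by $\|v\|_{\dot{\mathcal H}^{\frac12}}^2$ (translate a high-frequency wave packet away from the origin). Instead I would symmetrise. Writing $\phi'\partial_r^A=\mathcal S+\mathcal A$ with self-adjoint part $\mathcal S=-\tfrac12\Delta\phi$ and anti-self-adjoint part $\mathcal A=\phi'\partial_r^A+\tfrac12\Delta\phi$ (the magnetic term of $\partial_r^A$, being anti-self-adjoint already, does not affect the adjoint correction $\tfrac12\Delta\phi$), one gets $\Im\langle v,\phi'\partial_r^Av\rangle=\langle v,Qv\rangle$ with $Q:=-i\mathcal A$ self-adjoint, and the key estimate is
\begin{equation*}
  \|Qv\|_{L^2}=\Bigl\|\phi'\partial_r^Av+\tfrac12(\Delta\phi)v\Bigr\|_{L^2}\le\|\phi'\|_\infty\|\nabla_Av\|_{L^2}+\tfrac12\bigl\||x|\Delta\phi\bigr\|_\infty\Bigl\|\frac{v}{|x|}\Bigr\|_{L^2}\le C\|\nabla_Av\|_{L^2},
\end{equation*}
using $|\partial_r^Av|\le|\nabla_Av|$ and the magnetic Hardy inequality $\int|v|^2/|x|^2\le\tfrac{4}{(n-2)^2}\int|\nabla_Av|^2$ (valid for $n\ge3$, again via $\bigl|\nabla|v|\bigr|\le|\nabla_Av|$). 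Hence $Q^2\le C^2H_A$ as quadratic forms, so by operator monotonicity of the square root $|Q|\le C\,H_A^{1/2}$, whence
\begin{equation*}
  \bigl|\Im\langle v,\phi'\partial_r^Av\rangle\bigr|=|\langle v,Qv\rangle|\le\langle v,|Q|v\rangle\le C\,\langle v,H_A^{1/2}v\rangle=C\,\|H_A^{1/4}v\|_{L^2}^2\le C'\,\|v\|_{\dot{\mathcal H}^{\frac12}}^2 .
\end{equation*}
Combining the two estimates and invoking conservation of the $\dot{\mathcal H}^{\frac12}$ norm would conclude the proof.

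The hard part is exactly this last step for the imaginary part: replacing the lossy Cauchy--Schwarz by the operator inequality $Q^2\lesssim H_A$ --- morally, the statement that the antisymmetric first-order operator $\phi'\partial_r^A+\tfrac12\Delta\phi$ has symbol dominated by $|\xi|$ and is therefore bounded from $\dot{\mathcal H}^{\frac12}$ to its dual. A secondary, purely bookkeeping point is the equivalence of the homogeneous norms $\|\cdot\|_{\dot{\mathcal H}^{\frac12}}$, $\|H_A^{1/4}\cdot\|_{L^2}$ and $\bigl\||\cdot|\bigr\|_{\dot H^{\frac12}}$, which rests on (H1), (H2) and the diamagnetic (Kato) inequality.
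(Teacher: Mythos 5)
Your argument is correct in substance but takes a genuinely different and heavier route than the paper's, which is a short bilinear-interpolation argument. The paper introduces the form $T(f,g)=\int\bar f\,\nabla_Ag\cdot\nabla\phi\,dx$, bounds it by $\|f\|_{L^2}\|\nabla_Ag\|_{L^2}$ (since $\phi'$ is bounded), bounds it symmetrically by $\|\nabla_Af\|_{L^2}\|g\|_{L^2}$ after one integration by parts plus the magnetic Hardy inequality (using $|\Delta\phi|\lesssim 1/|x|$), and interpolates at $\theta=\tfrac12$; setting $f=g=u(t)$ and using conservation of $\dot{\mathcal H}^{1/2}$ finishes. Your split into real and imaginary parts --- with the imaginary part handled via the symmetric first-order operator $Q=-i\bigl(\phi'\partial_r^A+\tfrac12\Delta\phi\bigr)$, the form inequality $Q^2\le C^2H_A$, and operator monotonicity of the square root --- is a legitimate alternative, and you correctly diagnose that a naive weighted Cauchy--Schwarz fails. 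But your $Q$-step essentially hides the same interpolation inside the Heinz inequality, while additionally requiring $Q$ to be self-adjoint (not merely symmetric) for $|Q|$ and the bound $|\langle v,Qv\rangle|\le\langle v,|Q|v\rangle$ to make sense --- a point you leave unverified. A cleaner finish to your own route would be to observe that $Q:\dot H^1_A\to L^2$ is bounded, hence by symmetry and duality $Q:L^2\to(\dot H^1_A)^*$, and by interpolation $Q:\dot H^{1/2}_A\to\dot H^{-1/2}_A$; this is exactly what is needed and avoids the self-adjointness question. One small imprecision: the asserted two-sided equivalence $\|H_A^{1/4}v\|_{L^2}\sim\bigl\||v|\bigr\|_{\dot H^{1/2}}$ holds only in the direction given by the diamagnetic inequality; you only use the valid direction, so this does not affect the proof. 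You are right that the right-hand side of \eqref{eq.interpol} should carry a square.
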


\begin{proof}
  Let us consider the quadratic form
  \begin{equation*}
    T(f,g)=\int_{\R^n}\overline
    f(x)\nabla_Ag(x)\cdot\nabla\phi(x)\,dx.
  \end{equation*}
  Since $\phi'$ is bounded, we have the inequality
  \begin{equation}\label{eq.www}
    \left|T(f,g)\right|\leq C_1\|f\|_{L^2}\|\nabla_Ag\|_{L^2}.
  \end{equation}
  By integration by parts, we see that
  \begin{equation}\label{eq.qqq}
    T(f,g)=-\int_{\R^n}
    g(x)\nabla_A\overline
    f(x)\cdot\nabla\phi(x)\,dx-\int_{\R^n}\overline
    fg\Delta\phi\,dx;
  \end{equation}
  under the assumptions on $\phi$ we have that $\Delta\phi(x)\leq
  C/|x|$, hence using the magnetic Hardy's inequality \eqref{eq.hardy}
  we have by \eqref{eq.qqq} that
  \begin{equation}\label{eq.rrr}
    \left|T(f,g)\right|\leq C_2\|g\|_{L^2}\|\nabla_Af\|_{L^2}.
  \end{equation}
  By interpolation between \eqref{eq.www} and
  \eqref{eq.rrr} we get
  \begin{equation*}
    \left|T(f,g)\right|\leq C\|f\|_{\mathcal H^{1/2}}
   \|g\|_{\mathcal H^{1/2}}.
  \end{equation*}

\end{proof}

Now we choose an explicit multiplier $\phi$. For some $M>0$, let us
consider
\begin{equation*}
  \phi_0(x)=\int_0^x\phi_0'(s)\,ds,
\end{equation*}
where
\begin{equation*}
  \phi'_0=\phi'_0(r)=
  \begin{cases}
    M+\frac13r,
    \qquad
    r\leq1
    \\
    M+\frac12-\frac1{6r^2},
    \qquad
    r>1.
  \end{cases}
\end{equation*}
A direct computation shows that
\begin{equation*}
  \phi_0''(r)=
  \begin{cases}
    \frac13,
    \qquad
    r\leq1
    \\
    \frac1{3r^3},
    \qquad
    r>1
  \end{cases}
\end{equation*}
and the bilaplacian is given by
\begin{equation*}
  \Delta^2\phi_0(r)=-4\pi\delta_{x=0}-\delta_{|x|=1},
\end{equation*}
where ot the right hand side we have the Dirac masses concentrated
at zero and on the unit sphere, respectively. By scaling, for any
$R>0$ we define
\begin{equation*}
  \phi(r)=R\phi_0\left(\frac rR\right),
\end{equation*}
hence
\begin{equation}\label{eq.fi1}
  \phi'(r)=
  \begin{cases}
    M+\frac r{3R},
    \qquad
    r\leq R
    \\
    M+\frac12-\frac{R^2}{6r^2},
    \qquad
    r>R
  \end{cases}
\end{equation}
\begin{equation}\label{eq.fi2}
  \phi''(r)=
  \begin{cases}
    \frac1{3R},
    \qquad
    r\leq R
    \\
    \frac1R\cdot\frac{R^3}{3r^3},
    \qquad
    r>R
  \end{cases}
\end{equation}
\begin{equation}\label{eq.fibi}
  \Delta^2\phi(r)=-4\pi\delta_{x=0}-\frac1{R^2}\delta_{|x|=R}.
\end{equation}
Observe that the assumptions of Lemma \ref{lem.interpol} are
satisfied by $\phi$.

We can pass to the estimate on the left hand side of
\eqref{eq.virialS2}. Let us introduce the following formula, which
holds in any dimension:
\begin{equation}\label{eq.formula}
    \nabla_AuD^2\phi\overline{\nabla_Au}=
    \phi''\left|\nabla_A^ru\right|^2
    +\frac{\phi'}{r}\left|\nabla_A^\tau u\right|^2.
\end{equation}
Here $\nabla_A^\tau$ is the projection of $\nabla_A$ on the tangent
  plane to the sphere, such that
  \begin{equation*}
    \left|\nabla_A^\tau u(x)\right|^2+\left|\nabla_A^r u(x)\right|^2=
    \left|\nabla_A u(x)\right|^2,
  \end{equation*}
  \begin{equation*}
    \nabla_A^ru(x)=\left(\nabla_A u(x)\cdot\frac{x}{|x|}\right)\frac{x}{|x|},
    \qquad
    \nabla_A^\tau u\cdot\nabla_A^r u\equiv0.
  \end{equation*}
Now we insert \eqref{eq.fi1}, \eqref{eq.fi2} and \eqref{eq.fibi} in
\eqref{eq.virialS2}; neglecting the negative part $V_r^-$ of the
electric potential, we have
\begin{align}
    &  \frac{2}{3R}\int_{|x|\leq R}
    \left|\nabla_Au\right|^2\,dx
    +2M\int_{\R^n}
    \frac{|\nabla_A^\tau u|^2}{|x|}\,dx
    +\frac1{2R^2}\int_{|x|=R}|u|^2\,d\sigma
    \label{eq.virialS3}
    \\
    &
    -\int_{\R^n}\phi'V_r^+|u|^2\,dx
    +2\Im\int_{\R^n}u\phi'B_\tau\cdot\overline{\nabla_Au}\,dx
    \nonumber
    \\
    & \leq\frac d{dt}\left(\Im\int_{\R^n}\overline u(x,t)\nabla_Au(x,t)\cdot\nabla\phi(x)\,dx
    \right),
    \nonumber
  \end{align}
for all $R>0$. Let us now consider the term involving $B_\tau$ on
the left hand side of \eqref{eq.virialS3}; since the vector field
$B_\tau$ is tangential, by \eqref{eq.fi1} we can estimate
\begin{align}
  2\Im & \int_{\R^n}u\phi'B_\tau\cdot\overline{\nabla_Au}\,dx \geq
  -2\left|\Im\int_{\R^n}u\phi'B_\tau\cdot\overline{\nabla_Au}\,dx\right|
  \label{eq.radi}
  \\
  &\geq-2\left(M+\frac12\right)\int_{\R^n}|u|\cdot|B_\tau|\cdot|\nabla_A^\tau
  u|\,dx
  \nonumber
  \\
  & \geq-2\left(M+\frac12\right)\left(\int_{\R^n}\frac{|\nabla_A^\tau
  u|^2}{|x|}\,dx\right)^{\frac12}\left(\int_0^{+\infty}\,d\rho\int_{|x|=\rho}|x|\cdot|u|^2\cdot|B_\tau|^2
  \,d\sigma\right)^{\frac12}
  \nonumber
  \\
  & \geq-2\left(M+\frac12\right)K_1\left(\int_0^{+\infty}\,d\rho\left(
  \sup_{|x|=\rho}|B_\tau|^2|x|^3\right)\frac1{\rho^2}\int_{|x|=\rho}|u|^2
  \,d\sigma\right)^{\frac12}
  \nonumber
  \\
  & \geq-2\left(M+\frac12\right)K_1\left(\sup_{R>0}\frac1{R^2}\int_{|x|=R}|u|^2
  \,d\sigma\right)^{\frac12}\left(\int_0^{+\infty}\sup_{|x|=\rho}|B_\tau|^2|x|^3\,d\rho\right)^{\frac12}
  \nonumber
  \\
  & =-2\left(M+\frac12\right)K_1K_2|||B_\tau^2|||_3^{1/2},
  \nonumber
\end{align}
where
\begin{equation*}
  K_1=\left(\int_{\R^n}\frac{|\nabla_A^\tau
  u|^2}{|x|}\,dx\right)^{\frac12},
\end{equation*}
\begin{equation*}
  K_2=\left(\sup_{R>0}\frac1{R^2}\int_{|x|=R}|u|^2
  \,d\sigma\right)^{\frac12},
\end{equation*}
and we recall the definition
\begin{equation*}
  |||B_\tau^2|||_3=\int_0^{+\infty}\rho^3\sup_{|x|=\rho}|B_\tau|^2\,d\rho.
\end{equation*}
In an analogous way we treat the term involving $V_r^+$ in
\eqref{eq.virialS3}:
\begin{align}
  -\int\phi' & V_r^+|u|^2\,dx\geq
  -\left|\int\phi'V_r^+|u|^2\,dx\right|
  \label{eq.radiV}
  \\
  &
  \geq-\left(M+\frac12\right)\int_0^{+\infty}d\rho\int_{|x|=\rho}|V_r^+|\cdot|u|^2\,d\sigma
  \nonumber
  \\
  & \geq -\left(M+\frac12\right)\int_0^{+\infty}d\rho
  \left(\sup_{|x|=\rho}(|V_r^+|\cdot|x|^2)\frac1{\rho^2}\int_{|x|=\rho}|u|^2\,d\sigma\right)
  \nonumber
  \\
  & \geq-\left(M+\frac12\right)\left(\int_0^{+\infty}
  \sup_{|x|=\rho}|V_r^+|\cdot|x|^2\,d\rho\right)\cdot\left(\sup_{R>0}\frac1{R^2}
  \int_{|x|=\rho}|u|^2\,d\sigma\right)
  \nonumber
  \\
  & =-\left(M+\frac12\right)|||V_r^+|||_2K_2^2,
  \nonumber
\end{align}
where $K_2$ is as before and
\begin{equation*}
  |||V_r^+|||_2=\int_0^{+\infty}
 \rho^2 \sup_{|x|=\rho}|V_r^+\,d\rho.
\end{equation*}
Using \eqref{eq.radi}, \eqref{eq.radiV} and taking the supremum over
$R>0$ in \eqref{eq.virialS3}, we obtain
\begin{align*}
  \sup_{R>0}\frac{2}{3R} & \int_{|x|\leq R}
    \left|\nabla_Au\right|^2+2MK_1^2+\frac12K_2^2
    \\
    & \leq\mathcal
    R(t)+\left(M+\frac12\right)\cdot\left(2K_1K_2|||B_\tau^2|||_3^{1/2}+K_2^2|||V_r^+|||_2\right),
\end{align*}
or equivalently
\begin{equation}\label{eq.111}
  \sup_{R>0}\frac{2}{3R}\int_{|x|\leq R}
    \left|\nabla_Au\right|^2\,dx+
    C(M,K_1,K_2,B)\leq\mathcal
    R(t),
\end{equation}
where
\begin{align*}
  C(M,K_1,K_2,B)= & 2MK_1^2+\left[\frac12-\left(M+\frac12\right)|||V_r^+|||_2
  \right]K_2^2
  \\
  & -2\left(M+\frac12\right)|||B_\tau^2|||_3^{1/2}
  K_1K_2.
\end{align*}
Notice that all the quantities that appear in the above calculation are finite thanks to the assumptions (H1) and (H2) and the diamagnetic inequality.
In order to conclude the proof, it is sufficient now to optimize the
smallness condition on $|||B_\tau^2|||_3$ and $|||V_r^+|||_2$ under
which we can ensure that $C(M,K_1,K_2,B)\geq0 $.

Due to the homogeneity of $C$, it is not restrictive to fix $K_1=1$
and impose that
\begin{equation*}
\left[\frac12-\left(M+\frac12\right)|||V_r^+|||_2
  \right]K_2^2-2\left(M+\frac12\right)|||B_\tau^2|||_3^{1/2}
  K_2+2M>0,
\end{equation*}
for all $K_2>0$. This gives the following condition:
\begin{equation}\label{eq.ellisse}
  \frac{\left(M+\frac12\right)^2}{M}|||B_\tau^2|||_3
  +2\left(M+\frac12\right)|||V_r^+|||_2\leq1.
\end{equation}
In order to optimize \eqref{eq.ellisse} in terms of the size of $B$,
we choose $M=1/2$, such that the coefficient of $|||B_\tau^2|||_3$
is the minimum possible. Hence we obtain
\begin{equation}\label{eq.small2}
  |||B_\tau^2|||_3
  +|||V_r^+|||_2\leq\frac12\quad
  \Rightarrow
  \quad
  C(M,K_1,K_2,B)\geq0.
\end{equation}
As a consequence, if \eqref{eq.small} is satisfied we have by
\eqref{eq.111} and \eqref{eq.small2} that
\begin{equation*}
  \sup_{R>0}\frac1R\int_{|x|\leq R}
    \left|\nabla_Au\right|^2\,dx\leq C\mathcal
    R(t),
\end{equation*}
for some $C>0$. Moreover, if the strict inequality holds in
\eqref{eq.small}, we also have
\begin{align*}
  & \sup_{R>0}\frac1R\int_{|x|\leq R}
    \left|\nabla_Au\right|^2\,dx
    +\epsilon\int_{\R^n}\frac{|\nabla_A^\tau u|^2}{|x|}\,dx
    \\
  & +\epsilon\sup_{R>0}\frac1{R^2}\int_{|x|=R}|u|^2\,d\sigma
    \leq C\mathcal
    R(t),
\end{align*}
for some $\epsilon>0$.

At this point, the thesis immediately follows by integrating in time
the two last inequalities and applying Lemma \ref{lem.interpol} and
the conservation of the $\dot{\mathcal H}^{\frac12}$-norm to the
right hand side.

\subsection{Proof of Theorem \ref{thm.smooschro4}}\label{sec.proof2}

For the proof of the higher dimensional Theorem
\ref{thm.smooschro4}, we use the same techniques of the previous
one, but with different multipliers. First of all, let us start
again from \eqref{eq.virialS2}. We divide the estimate of the left
hand side into two steps, choosing two different multipliers.

{\bf Step 1.} Let us consider the multiplier $\phi(x)=|x|$, for
which

\begin{equation*}
  \phi'(r)=1,
  \qquad
  \phi''(r)=0,
  \qquad
  \Delta^2\phi(r)=-\frac{(n-1)(n-3)}{r^3}.
\end{equation*}

With this choice, by \eqref{eq.formula} we can rewrite
\eqref{eq.virialS2} as follows:

\begin{align}
  & 2\int_{\R^n}\frac{\left|\nabla_A^\tau u\right|^2}{|x|}\,dx
  +\frac{(n-1)(n-3)}{2}\int_{\R^n}\frac{|u|^2}{|x|^3}\,dx
  \label{eq.virialS5}
  \\
  &
  -\int_{\R^n}V_r|u|^2\,dx
  +2\Im\int_{\R^n}uB_\tau\cdot\overline{\nabla_A
  u}\,dx=\mathcal R(t).
  \nonumber
\end{align}

Here we used again the same notations of the previous Section. As in
the previous case, the main goal is to prove the positivity of the
left hand side. Let us assume that
\begin{equation*}
  |B_\tau(x)|\leq\frac{C_1}{|x|^2}
\end{equation*}
and estimate
\begin{align}
  -\left|2\Im\int_{\R^n}uB_\tau\cdot\overline{\nabla_A
  u}\,dx\right| &
  \geq-2\left(\int_{\R^n}\frac{|u|^2}{|x|^3}\,dx\right)^{\frac12}
  \left(\int_{\R^n}|x|^3|B_\tau|^2\left|\nabla_A^\tau u\right|^2\,dx\right)^{\frac12}
  \label{eq.virial10}
  \\
  & \geq-2C_1K_1K_2,
  \nonumber
\end{align}
where
\begin{align*}
  K_1 & =\left(\int_{\R^n}\frac{|u|^2}{|x|^3}\,dx\right)^{\frac12}
  \\
  K_2 & =\left(\int_{\R^n}\frac{\left|\nabla_A^\tau
  u\right|^2}{|x|}\,dx\right)^{\frac12}.
\end{align*}
Analogously, assume that
\begin{equation*}
  |V_r^+(x)|\leq\frac{C_2}{|x|^3}
\end{equation*}
and estimate
\begin{equation*}
  -\int_{\R^n}V_r|u|^2\,dx\geq -C_2K_1^2,
\end{equation*}
where $K_1$ is as before.

Consequently, for the left hand side of \eqref{eq.virialS5} we have
\begin{align}
  & 2\int_{\R^n}\frac{\left|\nabla_A^\tau u\right|^2}{|x|}\,dx
  +\frac{(n-1)(n-3)}{2}\int_{\R^n}\frac{|u|^2}{|x|^3}\,dx
  \label{eq.virialS6}
  \\
  & -\int_{\R^n}V_r|u|^2\,dx+2\Im\int_{\R^n}uB_\tau\cdot\overline{\nabla_A^\tau
  u}\,dx
  \nonumber
  \\
  &\ \ \geq 2K_2^2-2C_1K_1K_2-C_2K_1^2+\frac{(n-1)(n-3)}{2}K_1^2
  =:C(C_1,C_2,K_1,K_2).
  \nonumber
\end{align}
Once again, we want to optimize the condition on $C_1$ and $C_2$
under which the right hand side of \eqref{eq.virialS6} is positive
for all $K_1,K_2$.

Also here it is not restricting to fix $K_1=1$ and requiring that
\begin{equation*}
  \left[\frac{(n-1)(n-3)}{2}-C_2\right]K_1^2-2C_1K_1+2\geq0,
\end{equation*}
which gives the following condition:
\begin{equation}\label{eq.ipb}
  C_1^2+2C_2\leq(n-1)(n-3).
\end{equation}

As a consequence, if \eqref{eq.ipb} is satisfied, then
\begin{align}
  & 2\int_{\R^n}\frac{\left|\nabla_A^\tau u\right|^2}{|x|}\,dx
  +\frac{(n-1)(n-3)}{2}\int_{\R^n}\frac{|u|^2}{|x|^3}\,dx
  \label{eq.virialS7}
  \\
  & -\int_{\R^n}V_r^+|u|^2\,dx-2\Im\int_{\R^n}uB_\tau\cdot\overline{\nabla_A^\tau
  u}\,dx \geq0.
  \nonumber
\end{align}
Moreover, if the strict inequality holds in \eqref{eq.ipb}, we have
\begin{align}
  & 2\int_{\R^n}\frac{\left|\nabla_A^\tau u\right|^2}{|x|}
  +\frac{(n-1)(n-3)}{2}\int_{\R^n}\frac{|u|^2}{|x|^3}
  -\int_{\R^n}V_r^+|u|^2-2\Im\int_{\R^n}uB_\tau\cdot\overline{\nabla_A^\tau
  u}
  \label{eq.virialS8}
  \\
  & \ \ \ \geq
  \epsilon\left(\int_{\R^n}\frac{\left|\nabla_A^\tau u\right|^2}{|x|}+
  \frac{(n-1)(n-3)}{2}\int_{\R^n}\frac{|u|^2}{|x|^3}\right),
  \nonumber
\end{align}
for some $\epsilon>0$.

{\bf Step 2.} Now we perturb the multiplier $\phi$ to complete the
proof. Let us consider
\begin{equation}\label{eq.fifin}
  \widetilde\phi=\phi+\varphi,
\end{equation}
where $\phi(r)=r$ and $\varphi(r)=\int_0^r\varphi'(s)\,ds$, with
\begin{equation*}
  \varphi'(r)=
  \begin{cases}
    \frac{n-1}{2n}r,
    \qquad
    r\leq1
    \\
    \frac12-\frac1{2nr^{n-1}},
    \qquad
    r>1.
  \end{cases}
\end{equation*}
An explicit computation shows that
\begin{equation*}
  \varphi''(r)=
  \begin{cases}
    \frac{n-1}{2n},
    \qquad
    r\leq1
    \\
    \frac{n-1}{2nr^n},
    \qquad
    r>1,
  \end{cases}
\end{equation*}
\begin{equation*}
  \Delta^2\varphi=-\frac{n-1}{2}\delta_{|x|=1}-\frac{(n-1)(n-3)}{2r^3}\chi_{[1,+\infty)}.
\end{equation*}
Finally, for any $R>0$ we define the scaled multiplier
\begin{equation*}
  \widetilde\phi_R(r)=R\widetilde\phi\left(\frac rR\right);
\end{equation*}
we have explicitly that
\begin{equation}\label{eq.fifin2}
  \widetilde\phi_R(r)=r+R\varphi\left(\frac rR\right),
\end{equation}
where
\begin{equation}\label{eq.fifin3}
  \varphi'_R(r)=
  \begin{cases}
    \frac{(n-1)r}{2nR},
    \qquad
    r\leq R
    \\
    \frac12-\frac R{2nr^{n-1}},
    \qquad
    r>R.
  \end{cases}
\end{equation}
\begin{equation}\label{eq.fifin4}
  \varphi''_R(r)=
  \begin{cases}
    \frac1R\cdot\frac{n-1}{2n},
    \qquad
    r\leq R
    \\
    \frac1R\cdot\frac{R^n(n-1)}{2nr^n},
    \qquad
    r>R,
  \end{cases}
\end{equation}
\begin{equation}\label{eq.fifin5}
  \Delta^2\varphi_R=-\frac{n-1}{2R^2}\delta_{|x|=R}-\frac{(n-1)(n-3)}{2r^3}\chi_{[R,+\infty)}.
\end{equation}
At this point, we put the multiplier $\widetilde\phi_R$ in
\eqref{eq.virialS2}. Observe that, in this case,
\begin{equation*}
  \sup_{r\geq0}\widetilde\phi'(r)=\frac32;
\end{equation*}
hence for the terms involving $V_r$ and $B_\tau$ we can repeat the
same computation in \eqref{eq.virial10}, with the scaled constants
$\widetilde C_1=\frac32C_1$, $\widetilde C_2=\frac32C_2$; in this
way, the final condition on $B_\tau$ and $V_r$ turns out to be
\eqref{eq.ipbfin}. Under this assumption, when we put
$\widetilde\phi_R$ into \eqref{eq.virialS2}, we can use the term
$|x|$ in \eqref{eq.fifin2} to control the perturbative terms with
the potentials. Finally, by \eqref{eq.virialS7}, \eqref{eq.formula},
\eqref{eq.fifin3}, \eqref{eq.fifin4}, \eqref{eq.fifin5}, we have
proved that
\begin{equation}\label{eq.virialS9}
    \sup_{R>0}\frac1R\int_{|x|\leq R}|\nabla_Au|^2\,dx
    \leq C\mathcal R(t),
\end{equation}
for some $C>0$, if \eqref{eq.ipbfin} is satisfied. Moreover, if the
strict inequality holds in \eqref{eq.ipbfin}, we also have
\begin{equation}\label{eq.virialS11}
  \epsilon\left(\int_{\R^n}\frac{\left|\nabla_A^\tau u\right|^2}{|x|}\,dx
  +\frac{(n-1)(n-3)}{2}\int_{\R^n}\frac{|u|^2}{|x|^3}\,dx\right)
  \leq C\mathcal R(t),
\end{equation}
analogously to \eqref{eq.virialS8}. Now the proof continues exactly
the one of the 3D Theorem \ref{thm.smooschro}, by integration in
time and application of Lemma \ref{lem.interpol}.

\subsection{Proofs of Theorems \ref{thm.smoowave} and \ref{thm.smoowave4}.}
The proofs of the smoothing Theorems for the magnetic equation are
identical to the ones for the Schr\"odinger equations. The starting
point is now the virial identity \eqref{eq.virialW}, which can be
written as follows:
\begin{align}
    &  2\int_{\R^n}
    \nabla_AuD^2\phi\overline{\nabla_Au}\,dx-\frac12\int_{\R^n}|u|^2\Delta^2\phi\,dx
    \label{eq.virialW2}
    \\
    &
    +2\int_{\R^n}|u_t|^2\Psi\,dx-2\int_{\R^n}|\nabla_Au|^2\Psi\,dx+\int_{\R^n}|u|^2\Delta\Psi\,dx
    \nonumber
    \\
    & -\int_{\R^n}\phi'V_r|u|^2\,dx
    +2\Im\int_{\R^n}u\phi'B_\tau\cdot\overline{\nabla_Au}\,dx=\mathcal R(t),
    \nonumber
\end{align}
where
\begin{equation}\label{eq.radonde}
  \mathcal R(t)=-\frac{d}{dt}\Re\int_{\R^n}u_t(2\nabla\phi\cdot\overline{\nabla_A}u+\overline
  u\Delta\phi+\overline u\Psi)\,dx.
\end{equation}
For the LHS of \eqref{eq.virialW2}, we use the same multiplier
$\phi$ of the Schr\"odinger theorems, while the choice of $\Psi$ is
the following:
\begin{equation*}
  \Psi(x)=
  \begin{cases}
    \frac1{2R},
    \qquad
    |x|\leq R
    \\
    0,
    \qquad
    |x|>R.
  \end{cases}
\end{equation*}
By direct computation we see that
\begin{equation*}
  \Delta\Psi=
  \begin{cases}
    \frac nR,
    \qquad
    |x|\leq R
    \\
    \frac{n-1}{|x|},
    \qquad
    |x|>R,
  \end{cases}
\end{equation*}
hence $\Delta\Psi\geq0$ and the term involving it in
\eqref{eq.virialW2} can be neglected.
The only thing to control is the positivity of the term
\begin{equation*}
  \int_{|x|\leq
  R}(\nabla_AuD^2\phi\overline{\nabla_Au}-|\nabla_Au|^2\Psi)\,dx,
\end{equation*}
which is ensured by the choice of the constant $1/2$ in the
definition of $\Psi$ and the explicit formulas for $\phi'$ and
$\phi''$ introduced in the previous sections.

Finally, after integration in time, with the same techniques
involving Cauchy-Schwartz, magnetic Hardy's inequality and energy
conservation, the RHS of \eqref{eq.virialW2} turns out to be
controlled by $CE(0)$, and this concludes the proof.

\section{Strichartz estimates for the magnetic wave
equation}\label{sec.stri}

The final section of this paper is devoted to the proof of Theorem
\ref{thm.striwave}, as application of the smoothing estimates,
Theorems \ref{thm.smoowave} and \ref{thm.smoowave4}. We start with a
preliminary Lemma.

\begin{lemma}\label{lem.leving}
  Let $(p,q)$ be a non endpoint wave admissible couple; then, for all $T\in\R$, the
  following estimate holds
  \begin{equation}\label{eq.leving}
    \left\|\int_0^T\frac{\sin\left((T-\tau)\sqrt{-\Delta}\right)}{\sqrt{-\Delta}}
    F(\tau,\cdot)\,d\tau\right\|_{L^p\dot H^\sigma_q}\lesssim
    \sum_{j\in\mathbb Z}2^{\frac j2}\|F_j\|_{L^2L^2},
  \end{equation}
  where $\sigma=\frac1q-\frac1p+\frac12$ and
  \begin{equation*}
    F_j(t,x)=
    \begin{cases}
      F(t,x),
      \qquad
      |x|\in[2^j,2^{j+1}]
      \\
      0,
      \qquad
      |x|\in[0,\infty)\setminus[2^j,2^{j+1}].
    \end{cases}
  \end{equation*}
\end{lemma}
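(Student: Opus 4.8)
The plan is to prove \eqref{eq.leving} as a combination of the standard (non-endpoint) Strichartz estimates for the free wave equation with a dyadic decomposition in the spatial variable, the point being that the inhomogeneous term $F$ will only be controlled in the weighted $L^2L^2$ norms produced by the smoothing estimates of Theorems \ref{thm.smoowave} and \ref{thm.smoowave4}. First I would recall the inhomogeneous Strichartz inequality for $u_{tt}-\Delta u=G$ with zero data: for any wave admissible $(p,q)$ (non endpoint) and admissible $(\tilde p,\tilde q)$,
\begin{equation*}
  \left\|\int_0^T\frac{\sin\left((T-\tau)\sqrt{-\Delta}\right)}{\sqrt{-\Delta}}
  G(\tau)\,d\tau\right\|_{L^p\dot H^\sigma_q}
  \lesssim \|G\|_{L^{\tilde p'}L^{\tilde q'}},
\end{equation*}
with the usual relation between $\sigma$ and $(p,q)$; in particular, choosing the admissible pair $(\tilde p,\tilde q)=(2,\frac{2n}{n-1})$ (valid for $n\geq2$, and non endpoint for $n\geq3$ after the standard $TT^*$ argument of Keel--Tao), one gets the right-hand norm $\|G\|_{L^2 L^{\frac{2n}{n+1}}}$.

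Next I would apply this to each dyadic piece $G=F_j$. The support of $F_j$ lies in the annulus $\{2^j\le|x|\le2^{j+1}\}$, whose measure in each time slice is comparable to $2^{jn}$; by H\"older in $x$ on that slice,
\begin{equation*}
  \|F_j(t,\cdot)\|_{L^{\frac{2n}{n+1}}}
  \lesssim 2^{j\left(\frac{n+1}{2n}-\frac12\right)n}\|F_j(t,\cdot)\|_{L^2}
  =2^{j/2}\|F_j(t,\cdot)\|_{L^2},
\end{equation*}
and hence $\|F_j\|_{L^2L^{\frac{2n}{n+1}}}\lesssim 2^{j/2}\|F_j\|_{L^2L^2}$. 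Summing the Duhamel contributions over $j\in\mathbb Z$ and using the triangle inequality in $L^p\dot H^\sigma_q$ gives exactly \eqref{eq.leving}. Strictly speaking one should justify interchanging $\sum_j$ with the time integral, which is routine since $F=\sum_j F_j$ with almost disjoint supports.

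The main obstacle is really the bookkeeping on the non-endpoint range: one must check that the admissible auxiliary pair $(\tilde p,\tilde q)=(2,\frac{2n}{n-1})$ together with the given non-endpoint $(p,q)$ is an allowed pair for the inhomogeneous estimate in every dimension $n\ge3$ (the borderline $n=3$ requires care, as $(2,\frac{2n}{n-1})=(2,3)$ is the endpoint there and one uses instead a nearby non-endpoint pair, at the cost of a harmless loss absorbed by the dyadic sum), and that the derivative gap $\sigma=\frac1q-\frac1p+\frac12$ matches. Everything else — the H\"older computation on annuli and the dyadic summation — is elementary and produces no loss beyond the geometric factor $2^{j/2}$, which is precisely what appears on the right-hand side of \eqref{eq.leving}.
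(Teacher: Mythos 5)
There is a genuine gap: the auxiliary pair you introduce is not wave admissible, and the inhomogeneous Strichartz estimate you invoke with it is false.

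You take $(\tilde p,\tilde q)=(2,\tfrac{2n}{n-1})$, but wave admissibility $\tfrac2{\tilde p}+\tfrac{n-1}{\tilde q}=\tfrac{n-1}2$ with $\tilde p=2$ forces $\tilde q=\tfrac{2(n-1)}{n-3}$, which is much larger (e.g.\ $\infty$ for $n=3$, and $6$ versus your $8/3$ for $n=4$). Your pair is not ``nearby the endpoint''; it lies deep inside the forbidden region for every $n\geq3$. Even in the relaxed Foschi/Vilela framework for exotic inhomogeneous pairs, the acceptability condition $\tfrac1{\tilde p}<(n-1)(\tfrac12-\tfrac1{\tilde q})$ becomes $\tfrac12<\tfrac{n-1}{2n}$, which is false for all $n$. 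Concretely, the dual statement to the auxiliary inequality you need would be $\|e^{it\sqrt{-\Delta}}f\|_{L^2_tL^{2n/(n-1)}_x}\lesssim\|f\|_{L^2}$, and this is not a true estimate: with $L^2$ data and $L^2$ in time the smallest allowable spatial exponent is $2(n-1)/(n-3)$, so your $2n/(n-1)$ is strictly below threshold. The scaling computation you check is consistent, but scaling consistency alone does not give the bound. Since the claimed inhomogeneous Strichartz estimate
\begin{equation*}
\left\|\int_0^T\frac{\sin\left((T-\tau)\sqrt{-\Delta}\right)}{\sqrt{-\Delta}}G(\tau)\,d\tau\right\|_{L^p\dot H^\sigma_q}\lesssim\|G\|_{L^2L^{2n/(n+1)}}
\end{equation*}
is false, the H\"older-on-annuli reduction that follows it cannot repair the argument.

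The point of the $\sum_j2^{j/2}\|F_j\|_{L^2L^2}$ norm on the right-hand side of \eqref{eq.leving} is precisely that it encodes information not accessible from Strichartz estimates alone. The paper's proof uses the homogeneous Strichartz estimate \eqref{eq.freestri} together with the \emph{dual of the local smoothing estimate} \eqref{eq.leving2},
\begin{equation*}
\left\|\int e^{-i\tau\sqrt{-\Delta}}F(\tau,\cdot)\,d\tau\right\|_{L^2}\lesssim\sum_{j\in\mathbb Z}2^{\frac j2}\|F_j\|_{L^2L^2},
\end{equation*}
(an Agmon--H\"ormander/Morrey--Campanato type bound, not a Strichartz estimate), and then the Christ--Kiselev lemma to pass from the unretarded integral to the retarded one. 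If you want to salvage your approach, you would need to replace your auxiliary inhomogeneous Strichartz step by exactly this dual smoothing estimate; the dyadic decomposition is the right framework, but the $L^2L^{2n/(n+1)}$ norm cannot serve as the intermediary.
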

\begin{proof}
  We recall the usual Strichartz estimate for the free wave equation
  (see \cite{gv}, \cite{kt})
  \begin{equation}\label{eq.freestri}
    \left\|(\sqrt{-\Delta})^{-1}e^{it\sqrt{-\Delta}}f\right\|_{L^p\dot
    H^\sigma_q}\lesssim\|f\|_{L^2},
  \end{equation}
  with $\sigma=\frac1q-\frac1p+\frac12$ and the inequality
  \begin{equation}\label{eq.leving2}
    \sup_{R>0}\int_0^\infty\int_{|x|\leq
    R}\left|e^{it\sqrt{-\Delta}}f\right|^2\,dx\,dt\lesssim\|f\|_{L^2},
  \end{equation}
  analogous to \eqref{eq.smoowavefree}.
  By \eqref{eq.freestri} we get
  \begin{align}
    \left\|\int_0^T\frac{\sin\left((t-\tau)\sqrt{-\Delta}\right)}{\sqrt{-\Delta}}
    F(\tau,\cdot)\right\|_{L^p\dot H^\sigma_q}
    & \leq
    \left\|\frac{e^{it\sqrt{-\Delta}}}{\sqrt{-\Delta}}
    \int_0^Te^{-i\tau\sqrt{-\Delta}}F(\tau,\cdot)\right\|_{L^p\dot
    H^\sigma_q}
    \label{eq.ttstar}
    \\
    &
    \leq\left\|\int_0^Te^{-i\tau\sqrt{-\Delta}}F(\tau,\cdot)\,d\tau\right\|_{L^2}.
    \nonumber
  \end{align}
  The dual of estimate \eqref{eq.leving2} is
  \begin{equation*}
    \left\|\int
    e^{-i\tau\sqrt{-\Delta}}F(\tau,\cdot)\,d\tau\right\|_{L^2}\lesssim
     \sum_{j\in\mathbb Z}2^{\frac j2}\|F_j\|_{L^2L^2}
  \end{equation*}
  (see e.g. \cite{pv}); as a consequence and by a standard application of the Christ-Kiselev Lemma (\cite{ck}), we
  have \eqref{eq.leving}.
\end{proof}
Now we pass to the proof of Theorem \ref{thm.striwave}. As observed
in Section \ref{subsec.stri}, we can rewrite \eqref{eq.wave} in the
form \eqref{eq.waveF}, with $F$ given by \eqref{eq.F}. The solution
of \eqref{eq.waveF} is represented by
\begin{equation}\label{eq.solwave}
  u(t,\cdot)=\cos(t\sqrt{-\Delta})f+\frac{\sin(t\sqrt{-\Delta})}{\sqrt{-\Delta}}g+
  \int_0^t\frac{\sin\left((t-\tau)\sqrt{-\Delta}\right)}{\sqrt{-\Delta}}F(\tau,\cdot)\,d\tau.
\end{equation}
For the first two terms  we use  \eqref{eq.freestri}. For the last
term, recalling \eqref{eq.F}, by \eqref{eq.leving} we estimate:
\begin{align}\label{eq.anabla}
  & \left\|\int_0^t\frac{\sin\left((t-\tau)\sqrt{-\Delta}\right)}{\sqrt{-\Delta}}
    \left(A\cdot\nabla_Au+A^2u+Vu\right)\,d\tau\right\|_{L^p\dot H^\sigma_q}
    \lesssim
    \\
    &\lesssim\sum_{j\in\mathbb Z}2^{\frac
    j2}\left\{\left(\int_0^t\int_{|x|\in[2^j,2^{j+1}]}A\cdot\nabla_Au\right)^{\frac12}+
    \left(\int_0^t\int_{|x|\in[2^j,2^{j+1}]}(A^2+V)u\right)^{\frac12}\right\}.
    \nonumber
\end{align}
As observed in Remark \ref{rem.schlag}, assumption
\eqref{eq.ipBstri} implies
\begin{equation*}
  |A(x)|\leq\frac{C}{(1+|x|)^{1+\epsilon}},
\end{equation*}
moreover, \eqref{eq.ipBstri} is compatible with \eqref{eq.smallw}
and \eqref{eq.ipbfinw}. Consequently, by Holder inequality and the
smoothing estimates \eqref{eq.smoowave+} and \eqref{eq.smoow4} we
obtain
\begin{align}\label{eq.anabla2}
  & \sum_{j\in\mathbb Z}2^{\frac
    j2}\|(A\cdot\nabla_Au)_j\|_{L^2L^2}\leq
    \\
    &\leq
    \left(\sum_{j<0}2^{\frac j2}+\sum_{j\geq0}2^{\frac j2}2^{j\left(-\frac12-\epsilon\right)}
    \right)\left(\sup_{j\in\mathbb
    Z}\frac1{2^j}\int_0^t\int_{2^j|x|<2^{j+1}}|\nabla_Au|^2\right)^{\frac12}
    \leq C\sqrt{E(0)};
    \nonumber
    \\
  & \sum_{j\in\mathbb Z}2^{\frac j2}\|(A^2u+Vu)_j\|_{L^2L^2}\leq
  \label{eq.aquadrovu}
  \\
  &\leq\left(\sum_{j<0}2^{\frac j2}+\sum_{j\geq0}2^{-2j\epsilon}
    \right)\left(\sup_{j\in\mathbb
    Z}\frac1{2^{3j}}\int_0^t\int_{2^j|x|<2^{j+1}}|u|^2\right)^{\frac12}
    \nonumber
    \\
    &\sim C\left(\sup_{j\in\mathbb
    Z}\frac1{2^{2j}}\int_0^t\int_{|x|=2^j}|u|^2\,d\sigma\right)^{\frac12}
    \leq C\sqrt{E(0)}.
    \nonumber
\end{align}
By \eqref{eq.solwave}, \eqref{eq.anabla}, \eqref{eq.anabla2} and
\eqref{eq.aquadrovu} the proof is complete. \hfill$\square$

\appendix

\section{Magnetic Hardy's inequality}\label{sec.app}

We devote an Appendix to the magnetic version of Hardy's Inequality.
\begin{theorem}\label{thm.hardy}
  Let $n\geq3$ and let $A:\R^n\to\R^n$, $\nabla_A=\nabla-iA$. Then, for any $f\in D(H_A)$
  the following inequality
  holds:
  \begin{equation}\label{eq.hardy}
    \int_{\R^n}\frac{|f|^2}{|x|^2}\,dx\leq\frac{4}{(n-2)^2}\int_{\R^n}|\nabla_Af|^2\,dx.
  \end{equation}
\end{theorem}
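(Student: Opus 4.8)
The plan is to prove \eqref{eq.hardy} by a completion-of-squares (ground state substitution) argument, exploiting the fact that the magnetic term contributes nothing to the relevant cross term. I would first establish the inequality for $f\in C^\infty_c(\R^n)$ and then extend to arbitrary $f\in D(H_A)$ by a density/approximation argument; note that, for $n\ge3$, the weight $|x|^{-2}$ is locally integrable, so all the integrals below make sense and no exclusion of the origin is needed.

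For the main computation, fix a parameter $c>0$ and expand the manifestly nonnegative quantity
\begin{equation*}
  0\le\int_{\R^n}\Bigl|\nabla_Af+c\,\frac{x}{|x|^2}\,f\Bigr|^2\,dx
   =\int_{\R^n}|\nabla_Af|^2\,dx
   +2c\,\Re\int_{\R^n}\frac{x}{|x|^2}\cdot\overline f\,\nabla_Af\,dx
   +c^2\int_{\R^n}\frac{|f|^2}{|x|^2}\,dx.
\end{equation*}
The key point is that the magnetic part drops from the middle term: writing $\nabla_Af=\nabla f-iAf$ one has $\overline f\,\nabla_Af=\overline f\,\nabla f-i|f|^2A$, and since $\tfrac{x}{|x|^2}\cdot A$ is real the contribution of $-i|f|^2A$ is purely imaginary, hence killed by $\Re$. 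The surviving term is computed by integration by parts,
\begin{equation*}
  \Re\int_{\R^n}\frac{x}{|x|^2}\cdot\overline f\,\nabla f\,dx
   =\frac12\int_{\R^n}\frac{x}{|x|^2}\cdot\nabla|f|^2\,dx
   =-\frac12\int_{\R^n}\mathrm{div}\!\Bigl(\frac{x}{|x|^2}\Bigr)|f|^2\,dx
   =-\frac{n-2}{2}\int_{\R^n}\frac{|f|^2}{|x|^2}\,dx,
\end{equation*}
where I used $\mathrm{div}(x/|x|^2)=(n-2)/|x|^2$ and where the boundary term at the origin vanishes precisely because $n\ge3$. Substituting back,
\begin{equation*}
  0\le\int_{\R^n}|\nabla_Af|^2\,dx+\bigl(c^2-(n-2)c\bigr)\int_{\R^n}\frac{|f|^2}{|x|^2}\,dx,
\end{equation*}
and choosing $c=(n-2)/2$, which minimizes $c^2-(n-2)c$ at the value $-(n-2)^2/4$, yields exactly \eqref{eq.hardy}.

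As an alternative one could invoke the diamagnetic inequality $|\nabla|f||\le|\nabla_Af|$ a.e.\ and reduce to the classical scalar Hardy inequality applied to $|f|$; the direct argument above, however, is self-contained and makes transparent that the Coulomb gauge plays no role in this particular estimate. I expect the only point requiring any care to be the passage from $C^\infty_c$ to $D(H_A)$: given $f\in D(H_A)$, pick $f_k\in C^\infty_c(\R^n)$ with $f_k\to f$ in $L^2$ and $\nabla_Af_k\to\nabla_Af$ in $L^2$ (possible since $C^\infty_c$ is a form core for $H_A$); the inequality for each $f_k$ gives $\int|f_k|^2/|x|^2\le\frac{4}{(n-2)^2}\|\nabla_Af_k\|_{L^2}^2$, and since, along a subsequence, $f_k\to f$ a.e., Fatou's lemma upgrades this to the same bound for $f$, which is \eqref{eq.hardy}.
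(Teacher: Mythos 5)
Your proof is correct and takes essentially the same route as the paper: expanding $\int|\nabla_Af+c\,\tfrac{x}{|x|^2}f|^2\ge0$, integrating the cross term by parts via $\mathrm{div}(x/|x|^2)=(n-2)/|x|^2$, and optimizing over $c$. The only cosmetic differences are that you spell out why the magnetic part drops from the cross term and give a slightly more explicit density/Fatou argument; the paper's proof is the same in substance.
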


\begin{proof}
  We only need to prove \eqref{eq.hardy} for $f\in\mathcal
  C^\infty_0$, then we conclude by density.
  Let us observe that, for all $\alpha\in\R$,
  \begin{align}
    0 & \leq\int_{\R^n}|\nabla_Af+\alpha\frac{x}{|x|^2}f|^2\,dx
    \label{eq.ardi1}
    \\ &
    =\int_{\R^n}|\nabla_Af|^2\,dx+\alpha^2\int_{\R^n}\frac{|f|^2}{|x|^2}\,dx
    +2\alpha\Re\int_{\R^n}\overline
    f\frac{x}{|x|^2}\cdot\nabla_Af\,dx.
    \nonumber
  \end{align}
  By integration by parts, using the Leibnitz formula
  \begin{equation*}
    \nabla_A(fg)=g\nabla_Af+f\nabla g,
  \end{equation*}
  we see that
  \begin{equation*}
    2\alpha\Re\int_{\R^n}\overline
    f\frac{x}{|x|^2}\cdot\nabla_Af\,dx=-\alpha\int_{\R^n}
    |f|^2\text{div}\,\frac{x}{|x|^2}\,dx=-(n-2)\alpha
    \int_{\R^n}
    \frac{|f|^2}{|x|^2}\,dx.
  \end{equation*}
  Using this in \eqref{eq.ardi1} we have
  \begin{equation*}
    \left\{-\alpha^2+(n-2)\alpha\right\}\int_{\R^n}
    \frac{|f|^2}{|x|^2}\,dx\leq\int_{\R^n}
    |\nabla_Af|^2\,dx,
  \end{equation*}
  for all $\alpha\in\R$. Now we observe that
  \begin{equation*}
    \max_{\alpha\in\R}\left\{-\alpha^2+(n-2)\alpha\right\}=\frac{(n-2)^2}{4},
  \end{equation*}
  and this completes the proof.
\end{proof}


\end{document}